\newcommand{\Holder}{H\"older }
\newcommand{\Emb}{{E}}
\newcommand{\Pmb}{{P}}
\newcommand{\Rmb}{{\mathbb{R}}}
\newcommand{\Cmc}{{\mathcal{C}}}
\newcommand{\Fmc}{{\mathcal{F}}}
\newcommand{\Lmc}{{\mathcal{L}}}
\newcommand {\dagg}{\dagger}
\newcommand {\dens}{\varsigma}
\newcommand {\dist}{\mbox{dist}}
\numberwithin{equation}{section}
\numberwithin{figure}{section}
\numberwithin{table}{section}
\newtheorem{lemma}{Lemma}[section]
\newtheorem{theorem}[lemma]{Theorem}
\newtheorem{assumption}[lemma]{Assumption}
\newtheorem{remark}[lemma]{Remark}
\newtheorem{definition}[lemma]{Definition}
\numberwithin{equation}{section}
\begin{document}

\title{Quasistationary Distributions and Ergodic Control Problems}
\author{Amarjit Budhiraja, Paul Dupuis, Pierre Nyquist, Guo-Jhen Wu}
\maketitle

\begin{abstract}
We introduce and study the basic properties of two 
ergodic stochastic control problems associated with the quasistationary distribution (QSD) of a diffusion process $X$ relative to a bounded domain. 
The two problems are in some sense dual, with one
defined in terms of the generator associated with $X$ and the other in terms
of its adjoint. 
Besides proving wellposedness of the associated Hamilton-Jacobi-Bellman equations,
we describe how they can be used to characterize important properties of the QSD.
Of particular note is that the QSD itself can be identified,  up to normalization, in terms of the cost potential of the control problem associated with the adjoint. 

\noindent\newline

\noindent \textbf{AMS 2010 subject classifications:} 60F99, 60H10, 93E20. \newline

\noindent \textbf{Keywords:} Quasistationary distribution, diffusion process, stochastic control, ergodic control, Hamilton-Jacobi-Bellman equation, $Q$-processes, Dirichlet eigenvalue problems.
\end{abstract}

%

\section{Introduction}

\label{sec:intro}  
Consider a time homogeneous\ Markov process $\{X(t),t\geq
0\}$ with state space $\mathbb{R}^{d}$, and an open subset $G\subset \mathbb{%
R}^{d}$. Suppose the process is started with a distribution $\gamma $, and
that if we condition on the process remaining in $G$ for any interval $[0,t]$%
, $t\in (0,\infty )$, then the conditioned distribution of $X(t)$ is also $%
\gamma $. Such a distribution, if it exists, is called a quasistationary
distribution (QSD) for $X$ and $G$. As with ordinary stationary
distributions, QSDs are of interest in that, under appropriate uniqueness and
communication conditions, they can characterize (conditional) behavior over
long time intervals when starting with more general initial distributions.
However, QSDs are in some ways more complicated than stationary
distributions. For example, assuming that the probability to remain in $G$
decays, then characterizing the QSD will also involve characterizing this
decay rate $\lambda >0$. As another example, it turns out that uniqueness
issues are in general more subtle for QSDs, in that even for very nice sets
and for processes whose communication properties guarantee that there is
only one stationary distribution, it can nonetheless happen that there are
many quasistationary distributions. For general references we refer to the book \cite{colmarmar},
the survey paper \cite{melvil} and the recent work \cite{champagnat2017general}.

The purpose of this paper is to identify and study the basic properties of
two stochastic control problems that characterize important quantities
associated with a QSD when $G$ is a bounded domain. The two problems are in some sense dual, with one
defined in terms of the generator associated with $X$ and the other in terms
of its adjoint. Both problems identify the rate $\lambda $ with an optimal
average cost per unit time (or ergodic cost). 
In both of these control problems the admissible controls are absolutely continuous non-anticipative processes under which the state process remains within $G$ at all times a.s. As a consequence the controls are necessarily unbounded and the associated Hamilton-Jacobi-Bellman equations are given in terms of a boundary condition which requires the solution to diverge to $\infty$ at the boundary of $G$. We prove the wellposedness of these equations and characterize the solutions as the optimal cost and the cost potential (also referred to as the value
function) of the associated ergodic control problems. 

Next, we identify the optimally controlled state processes corresponding to the two control problems.
The optimally controlled dynamics for the problem phrased in terms of the generator of $X$ has appeared in previous works and, under suitable conditions on the model (see \cite{pin} and Remark \ref{Qprocess}),  can be identified as the associated $Q$-process (see \cite{chavil2}) for the underlying diffusion. To the best of our knowledge the optimally controlled diffusion for the ergodic control problem associated with the adjoint of the generator of $X$ has not appeared in previous works on quasistationary distributions and $Q$-processes, although a direct construction of the semigroup of this Markov process can be found in \cite[Section 3]{goqizh}. The two optimally controlled state processes are closely related in that the second process is simply the time reversal of the first when initialized with its unique stationary distribution (see 
Remark \ref{rem:longrem}(3) for a detailed discussion).
For the second ergodic control problem, we
show that the value function for the control problem  identifies the QSD, at least up to normalization.

In forthcoming work we will make two interesting uses of the
representations. One application is to the study of the non-uniqueness issues mentioned
previously, which can occur when the domain $G$ is unbounded. We expect that
different QSDs for the same process and domain can be distinguished
according to different classes of controls and costs that are allowed in the ergodic
control problem, in a way reminiscent of other well known examples of
non-uniqueness in control theory and the calculus of variations \cite{heimiz}.
The other application is to numerical approximation. Using the
Markov chain approximation method for stochastic control problems \cite{kusdup1}, we will construct and prove the
convergence of schemes for approximating the rate $\lambda $ and the QSD $%
\gamma $. These approximations appear to be novel in this context. With the goal of constructing efficient numerical schemes, in the current work we in fact study a collection of ergodic control problems parametrized by a smooth strictly positive function $\dens$ on $\mathbb{R}^d$. These control problems are associated with the adjoints of the generator of the underlying diffusion on $L^2(\mathbb{R}^d, \dens(x) dx)$ for the different choices of $\dens$. It turns out (see Remark \ref{rem:longrem}(2)) that although the optimally controlled state process for all of these control problems is the same irrespective of the choice of $\dens$, the state dynamics and the cost function depend on $\dens$ in an important manner. One expects that the choice of $\dens$ will significantly impact the performance of the numerical schemes based on the corresponding ergodic control problem. These issues will be investigated in a systematic manner in our forthcoming work.

Although in this paper we focus on the case of diffusion process, the
analagous results can be formulated for continuous time jump-diffusions. The
case of a countable state pure jump Markov process is particularly simple,
so long as the state space $\mathcal{X}\subset \mathbb{R}^d$ does not contain accumulation 
points.
The simplicity is due to the fact that in formulating control problems one can simply
restrict to feedback controls that do not allow the process to exit $\mathcal{X} \cap G$.
The corresponding results will be needed for the numerical approximations
mentioned previously, and results analogous to those proved here for diffusions
can be obtained with simpler proofs.  

The paper is organized as follows. In Section \ref{sec:assuandres} we present our main assumptions and results. In particular, Section \ref{sec:ergcontprob} introduces the two ergodic control problems that are the focus of this work. Theorem \ref{thm:ergcontandexitrate} gives our main results for the ergodic control problem associated with the generator of the diffusion while Theorem \ref{thm:ergcontandqsd} gives our results for the ergodic control problem for the adjoint of the generator. In Theorem \ref{thm:llstar} we collect some well known results for Dirichlet eigenvalue problems for elliptic operators and their relationship with quasistationary distributions and extinction rates.
Proofs of our main results (i.e. Theorems \ref{thm:ergcontandexitrate} and \ref{thm:ergcontandqsd}) are provided in Section \ref{sec:secproofs}.

\subsection{Notation}

The following notation will be used. By a domain $G$ in $\mathbb{R}^{d}$ we
will mean an open connected subset of $\mathbb{R}^{d}$. The boundary of such
a domain will be denoted by $\partial G$. Distance between sets $A$ and $B$
in ${\mathbb{R}}^{d}$ is defined as $\inf \{\Vert x-y\Vert :x\in A,y\in B\}$
and is denoted as $\dist(A,B)$.
If $A$ is a singleton $\{x\}$ we write the
distance simply as $\dist(x,B)$. For a domain $G$ in ${\mathbb{R}}^{d}$, $%
\alpha \in (0,1]$ the \Holder space ${\mathcal{C}}^{\alpha }(\bar{G})$
consists of continuous real functions $u$ on $\bar{G}$ which satisfy 
\begin{equation*}
\lbrack u]_{\alpha }\doteq \sup_{x,y\in G,x\neq y}\frac{\left\vert
u(x)-u(y)\right\vert }{\left\Vert x-y\right\Vert ^{\alpha }}<\infty ,
\end{equation*}%
and for $k\in {\mathbb{N}}_{0}$, the space ${\mathcal{C}}^{k,\alpha }(\bar{G}%
)$ consists of real valued functions with continuous partial derivatives up
to $k$-th order, and such that all the $k$-th order partial derivatives are
in ${\mathcal{C}}^{\alpha }(\bar{G})$. We denote by ${\mathcal{C}}%
_{0}^{k,\alpha }(\bar{G})$ the collection of all $u\in {\mathcal{C}}%
^{k,\alpha }(\bar{G})$ with $u(x)=0$ for $x\in \partial G$. For $k \in \mathbb{N}$ and $\alpha \in (0,1]$, we say the $%
\partial G$ is ${\mathcal{C}}^{k,\alpha }$,
if $\partial G$ is a $d-1$ dimensional ${\mathcal{C}}^{k,\alpha }$ manifold, namely,
each point in $\partial G$ has a neighborhood in which $\partial G$ is the graph of a  ${\mathcal{C}}^{k,\alpha }$ function of $d-1$ of the coordinates (cf. \cite[Section 6.2]{giltru}).
For twice differentiable $h:{\mathbb{R}}%
^{d}\rightarrow {\mathbb{R}}$, the gradient $Dh(x)$ is the vector 
$(h_{x_1}(x), \ldots , h_{x_d}(x))'$ and the Hessian $D^{2}h(x)$ is the $d\times d$ matrix with the $(i,j)$-th entry given as
$h_{x_ix_j}(x)$, where
$h_{x_{i}}\doteq \frac{\partial }{\partial
x_{i}}h$ and $h_{x_{i}x_{j}}\doteq \frac{\partial ^{2}}{\partial
x_{i}\partial x_{j}}h$ for $1\leq i,j\leq d$. Denote by ${\mathcal{C}}_{0}^{2}({\mathbb{R}}^{d})$ the space of all real valued functions on $\mathbb{R}^d$ that are twice continuously differentiable with the function and the derivatives vanishing at $\infty$. The trace of a square matrix $A$ will be denoted as  $\mbox{tr}(A)$. For $z\in {\mathbb{C}}
$, $\mbox{Re}(z)$ is the real part of $z$. 
 For a Polish space $S$, denote
by $\mathcal{P}(S)$  the
 space of probability measures  on $S$
equipped with the topology of weak convergence.  Denote by $\mathcal{C}([0,T]:S)$ (resp., $\mathcal{C}([0,\infty):S)$) the space of $S$-valued continuous functions on $[0,T]$ (resp., $[0,\infty)$) equipped with the uniform (resp., locally uniform) topology.

\section{Assumptions and Main Results}

\label{sec:assuandres}

Let $(\Omega ,\mathcal{F},P,\{\mathcal{F}_{t}\}_{t\geq 0})$ be a
filtered probability space equipped with an $m$-dimensional standard $\{%
\mathcal{F}_{t}\}$-Brownian motion $\{B(t)\}_{t\geq 0}$. Consider the $d$%
-dimensional stochastic differential equation (SDE) given as \begin{equation}
dX(t)=b(X(t))dt+\sigma (X(t))dB(t),\;X(0)=X_{0}  \label{eq:sdemodel}
\end{equation}%
where $X_{0}$ is a $\mathcal{F}_{0}$-measurable random variable and $b:%
\mathbb{R}^{d}\rightarrow \mathbb{R}^{d}$ and $\sigma :\mathbb{R}%
^{d}\rightarrow \mathbb{R}^{d\times m}$ are Lipschitz maps, namely for some $%
\kappa _{1}\in (0,\infty )$ 
\begin{equation}
\label{eqn:lip}
\left\Vert b(x)-b(y)\right\Vert +\left\Vert \sigma (x)-\sigma (y)\right\Vert
\leq \kappa _{1}\left\Vert x-y\right\Vert \mbox{ for all }x,y\in {\mathbb{R}}%
^{d}.
\end{equation}%
Under \eqref{eqn:lip} there is a unique pathwise solution of the SDE %
\eqref{eq:sdemodel} for every $\mathcal{F}_{0}$-measurable random variable $%
X_{0}$. When $X_{0}$ has probability distribution $\mu \in \mathcal{P}(\mathbb{R}^{d})$, we will write the probability measure $P$ as $P_{\mu}$.
When $\mu
=\delta _{x}$ for some $x\in \mathbb{R}^{d}$, we will write $P_{\mu
}$ simply as $P_{x}$. 
Let $G$ be a bounded domain in $\mathbb{R}^d$. We are interested in the
study of the QSD of the Markov family $\{X(\cdot),
\{P_{x}\}_{x \in \mathbb{R}^d}\}$ on the domain $G$, which is defined as follows.

\begin{definition}
A probability measure $\gamma \in \mathcal{P}(G) $ is said to be a
QSD of the Markov family $\{X(\cdot),
\{P_{x}\}_{x \in \mathbb{R}^d}\}$ on the domain $G$
if 
\begin{equation}\label{eq:qsddefn}
P_{\gamma} (X(t) \in A \mid \tau_{\partial} >t) = \gamma(A), 
\mbox{
for every } t \in [0, \infty) \mbox{ and } A \in \mathcal{B}(G),
\end{equation}
where $\tau_{\partial} \doteq \inf \{t\ge 0:X(t) \in G^c\}$.
\end{definition}

Frequently we will refer to $\gamma$ simply as a QSD for $X$.

We now introduce additional assumptions on the coefficients and also a
condition on the domain. Let $a=\sigma \sigma ^{T}$, where $\sigma ^{T}$
denotes the transpose of $\sigma$.

\begin{assumption}
\label{assu:coeff} 
The boundary $\partial G$ is ${\mathcal{C}}^{2,\alpha}$ for some $\alpha \in (0,1]$ and
the following hold for some open set $H$ that contains $\bar{G}$.

\begin{enumerate}
\item For some  $\alpha \in (0,1]$, $\sigma _{i,j}\in {\mathcal{C}}^{2,\alpha
}(H)$, and $b_{i}\in {\mathcal{C}}^{1,\alpha }(%
H)$ for all $i=1,\ldots ,d$ and $j=1,\ldots ,m$.

\item There is a $\kappa _{0}\in (0,\infty )$ such that for every $v\in {%
\mathbb{R}}^{d}$ 
\begin{equation*}
\inf_{x\in H}v\cdot a(x)v\geq \kappa _{0}\Vert v\Vert ^{2}.
\end{equation*}
\end{enumerate}
\end{assumption}

Consider the infinitesimal generator ${\mathcal{L}}$ of the Markov process %
\eqref{eq:sdemodel} which, when restricted to ${\mathcal{C}}%
_{0}^{2}({\mathbb{R}}^{d})$, is given as follows. For $f\in {\mathcal{C}}%
_{0}^{2}({\mathbb{R}}^{d})$ 
\begin{equation}
{\mathcal{L}}f(x)\doteq  b(x)\cdot Df(x)+\frac{1}{2}\mbox{tr}\lbrack
a(x)D^{2}f(x)],\;x\in {\mathbb{R}}^{d}.  \label{eq:gener}
\end{equation}%
The formal adjoint ${\mathcal{L}}^{\ast }$ 
on ${\mathcal{C}}_{0}^{2}({\mathbb{R}}^{d})$ of this operator is defined as
follows. For $f\in {\mathcal{C}}_{0}^{2}({\mathbb{R}}^{d})$ 
\begin{equation}
{\mathcal{L}}^{\ast }f(x)\doteq \beta (x)\cdot Df(x)+\frac{1}{2}\mbox{tr}%
\lbrack a(x)D^{2}f(x)]-c(x)f(x),\;x\in {\mathbb{R}}^{d},  \label{eq:generadj}
\end{equation}%
where 
\begin{equation}
\beta _{i}(x)\doteq -b_{i}(x)+\sum_{j=1}^{d}(a_{ij}(x))_{x_{j}}, \;\; 1 \le i \le d,  \label{eq:defbeta}
\end{equation}%
and 
\begin{equation}
c(x)\doteq -\frac{1}{2}\sum_{i,j=1}^{d}(a_{ij}(x))_{x_{i}x_{j}}+%
\sum_{i=1}^{d}(b_{i}(x))_{x_{i}}.  \label{eq:cterm}
\end{equation}%
%
%
%
%
%
%
%
%

It will be useful to consider the adjoint of ${\mathcal{L}}$ with respect to
measures other than the Lebesgue measure. Let $\dens\in {\mathcal{C}}^{2}({%
\mathbb{R}}^{d})\cap {\mathcal{C}}^{2,\alpha }(\bar{G})$ be such that $\dens%
(x)>0$ for all $x\in {\mathbb{R}}^{d}$. Consider the operator ${\mathcal{L}}%
_{\dens}^{\ast }$ on ${\mathcal{C}}%
_{0}^{2}({\mathbb{R}}^{d})$ defined as follows. For $f\in {\mathcal{C}}_{0}^{2}({%
\mathbb{R}}^{d})$, 
\begin{equation}
{\mathcal{L}}_{\dens}^{\ast }f(x)\doteq \frac{1}{\dens(x)}{\mathcal{L}}%
^{\ast }(f\dens)(x),\;x\in {\mathbb{R}}^{d}.  \label{lstardens}
\end{equation}%
Then ${\mathcal{L}}_{\dens}^{\ast }$ is the adjoint to ${\mathcal{L}}$ on the
space $L^{2}(\dens dx)$. Note that when $\dens\equiv 1$, ${\mathcal{L}}_{%
\dens}^{\ast }={\mathcal{L}}^{\ast }$.

The following result on Dirichlet eigenvalue problems for ${\mathcal{L}}$
and ${\mathcal{L}}^*_{\dens}$ will be used in our work. A $\lambda \in {%
\mathbb{C}}$ is a Dirichlet eigenvalue of $-{\mathcal{L}}$ on the domain $G$%
, if for some nonzero $\phi \in {\mathcal{C}}^2(G)\cap \mathcal{C}(\bar G)$ 
\begin{equation}  \label{eq:ev}
\begin{aligned} -{\mathcal{L}} \phi(x) &= \lambda \phi(x), & x \in G,\\
\phi(x) &= 0, & x \in \partial G. \end{aligned}
\end{equation}
The function $\phi$ is called a Dirichlet eigenvector of $-{\mathcal{L}}$
associated with the eigenvalue $\lambda$. Dirichlet eigenvalues and
eigenvectors of $- {\mathcal{L}}^*_{\dens}$ are defined in a similar manner.
Since  we will only be concerned with Dirichlet eigenvectors and eigenvalues for $-{\mathcal{L}}$
and $- {\mathcal{L}}^*_{\dens}$, henceforth we will drop the adjective ``Dirichlet".
For part 6 of the theorem below we will assume the following in addition to Assumption \ref%
{assu:coeff}.
Assumption \ref{ass:extsmoo} is required in the reference \cite{sito2},
and can be replaced by any set of conditions under which the relation \eqref{eq:llstar} holds.

\begin{assumption}
\label{ass:extsmoo} For $H$ as in Assumption \ref{assu:coeff} and some $\alpha \in (0,1]$ the boundary $\partial G$ is ${\mathcal{%
C}}^{4,\alpha}$ and the functions $\sigma_{i,j} \in {\mathcal{C}}%
^{3,\alpha}(H)$ for all $i = 1, \ldots , d$ and $j=1, \ldots , m$.
\end{assumption}
Denote by $\mathcal{C}^*$ the class of all $\dens \in {\mathcal{C}}^2({\mathbb{R}}^d) \cap {%
\mathcal{C}}^{2,\alpha}(\bar G)$ such that $\dens >0$.
Parts 1-5 of the following theorem are immediate consequences of well-known results. Part 6 has been shown in \cite[Proposition 1.10]{pin}, under certain technical conditions on the coefficients (see Hypothesis 2 and 3 therein) which, for example, are satisfied when $a^{-1}b = D U$ for some $U \in \mathcal{C}^2(\bar G)$. We were unable to find a reference
that covers part 6 in the generality considered here. The proof of Theorem \ref{thm:llstar} is provided in Section \ref{sec:proofthmfirst}.
\begin{theorem}
\label{thm:llstar} 
Suppose Assumption %
\ref{assu:coeff} is satisfied. The following hold.

\begin{enumerate}
\item For any $\dens \in \mathcal{C}^*$, $\tilde v$ is an  eigenvector of $-{\mathcal{L}}^*_{\dens}$
	associated with the eigenvalue $z$ if and only if, $v \doteq \dens \tilde v$ is an eigenvector of $-{\mathcal{L}}^*$
associated with the eigenvalue $z$.
\item There exists a $\lambda \in (0,\infty )$ which is an
eigenvalue of  $-{\mathcal{L}}$ and $-{\mathcal{L}}_{\dens}^{\ast }$ for every $\dens \in \mathcal{C}^*$, and
is such that $\mbox{Re}(\tilde{\lambda}%
)\geq \lambda $ for any other  eigenvalue $\tilde{\lambda}$ of $-{%
\mathcal{L}}$ or $-{\mathcal{L}}_{\dens}^{\ast }$, where $\dens \in \mathcal{C}^*$.
\item There is a unique quasistationary distribution $\nu$ for $X$.
\item There are $\psi , \varphi \in {\mathcal{C}}^{2,\alpha}_0(\bar G)$
such that $\psi(x)>0$ and $\varphi(x)>0$ for all $x\in G$, and $\psi$
[resp., $\varphi$] is an eigenvector of $-{\mathcal{L}}$
[resp., $-{\mathcal{L}}^*$] associated with the eigenvalue $\lambda$.
Furthermore, $\int_G
\varphi(x) dx = 1$ and $\int_G \psi(x) \nu(dx) =1$. Any other
eigenvector associated with the eigenvalue $\lambda$ of $-{\mathcal{L}}$ and 
$-{\mathcal{L}}^*$ is a scalar multiple of $\psi$ and $\varphi $ respectively.
For any $\dens \in \mathcal{C}^*$,
$\tilde \varphi \doteq \varphi/\dens$ is an eigenvector of 
 $-{\mathcal{L}}^*_{\dens}$ associated with the eigenvalue $\lambda$.


\item The function $\psi $ characterizes the exit rate from $G$ associated
with the Markov process \eqref{eq:sdemodel} as follows. For all $x\in G$ 
\begin{equation}
\psi (x)=\lim_{t\rightarrow \infty }e^{\lambda t}P_{x}(\tau
_{\partial }>t).
\end{equation}

\item Suppose in addition that Assumption \ref{ass:extsmoo} is satisfied.
Then the unique quasistationary distribution $\nu$ of the Markov
family $\{X(\cdot ),\{P_{x }\}_{x \in \mathbb{R}%
^{d}}\}$ on the domain $G$ can be characterized as $\nu (dx)= \varphi (x)dx$.
\end{enumerate}
\end{theorem}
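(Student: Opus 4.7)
The plan is to dispose of Parts 1--5 by appealing to classical theory and to concentrate the genuine work on Part 6. For Part 1 one simply substitutes the defining identity \eqref{lstardens}: the equation $-{\mathcal{L}}^*_\dens \tilde v = z \tilde v$ multiplied through by $\dens$ becomes $-{\mathcal{L}}^*(\dens \tilde v) = z (\dens \tilde v)$, and conversely, so $\tilde v \mapsto \dens \tilde v$ is a bijection of the corresponding eigenspaces that preserves vanishing at $\partial G$. Parts 2 and 4 follow from Krein--Rutman theory applied to the compact strongly positive resolvent of $-{\mathcal{L}}$ on $\mathcal{C}_0(\bar G)$: under Assumption \ref{assu:coeff}, compactness and strict positivity come from Schauder estimates and the strong maximum principle, delivering a simple real principal eigenvalue $\lambda>0$ of smallest real part with a strictly positive eigenfunction; Schauder regularity up to the boundary gives $\psi,\varphi \in {\mathcal{C}}^{2,\alpha}_0(\bar G)$, and Part 1 transfers everything to $-{\mathcal{L}}^*_\dens$. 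The prescribed normalizations are then mere choices of scaling. Part 3 (uniqueness of the QSD for a uniformly elliptic diffusion on a bounded smooth domain) and Part 5 (the identification $\psi(x) = \lim_{t\to\infty} e^{\lambda t} P_x(\tau_\partial > t)$) are standard consequences of the spectral decomposition of the killed semigroup on $G$.

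For Part 6 the strategy is to verify directly that $\mu(dx) \doteq \varphi(x)\,dx$ (which is a probability measure by the normalization $\int_G \varphi \,dx = 1$) satisfies the QSD property \eqref{eq:qsddefn}, and then invoke uniqueness from Part 3 to conclude $\nu = \mu$. Fix $f \in {\mathcal{C}}^{2,\alpha}_0(\bar G)$ and define
\begin{equation*}
u(t,x) \doteq E_x\!\left[f(X(t))\,\mathbf{1}_{\{\tau_\partial > t\}}\right].
\end{equation*}
Under Assumption \ref{ass:extsmoo}, parabolic Schauder theory for the Cauchy--Dirichlet problem on a ${\mathcal{C}}^{4,\alpha}$ domain with ${\mathcal{C}}^{3,\alpha}$ coefficients ensures that $u(t,\cdot)\in {\mathcal{C}}^{2}(\bar G)$ with $u(t,\cdot)|_{\partial G} = 0$, and $u_t = {\mathcal{L}} u$ on $G$. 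Setting $w(t) \doteq \int_G \varphi(x) u(t,x)\,dx$, the central computation is
\begin{equation*}
w'(t) = \int_G \varphi\, {\mathcal{L}} u\, dx = \int_G u\, {\mathcal{L}}^* \varphi\, dx = -\lambda\, w(t),
\end{equation*}
so $w(t) = e^{-\lambda t}\int_G \varphi f\,dx$. Monotone approximation of $\mathbf{1}_G$ by functions in ${\mathcal{C}}^{2,\alpha}_0(\bar G)$ then yields $P_\mu(\tau_\partial > t) = e^{-\lambda t}$, and dividing the general identity by this quantity gives $E_\mu[f(X(t)) \mid \tau_\partial > t] = \int_G f\,\varphi\,dx$ for every such $f$. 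A standard density argument (e.g.\ viewing both sides as equality of sub-probability measures on $G$ tested against $\mathcal{C}_c(G)$) extends this to all bounded measurable $f$, which is precisely \eqref{eq:qsddefn} for $\gamma = \mu$.

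The main obstacle, and the reason Assumption \ref{ass:extsmoo} is imposed, is the integration by parts in the computation of $w'(t)$. This requires $u(t,\cdot)$ to lie in ${\mathcal{C}}^{2}(\bar G)$ (not merely in the interior) and to vanish on $\partial G$ in a sense strong enough that the divergence-theorem boundary terms, which involve first-order normal derivatives of $u$ and $\varphi$ paired with the coefficients of $a$ and $b$, indeed drop. This is precisely the regularity packaged by \eqref{eq:llstar} as established in \cite{sito2}: the ${\mathcal{C}}^{4,\alpha}$ boundary and ${\mathcal{C}}^{3,\alpha}$ coefficient smoothness propagate through parabolic regularity up to $\partial G$ and produce the duality identity $\int_G \varphi\,{\mathcal{L}} u\,dx = \int_G u\,{\mathcal{L}}^* \varphi\,dx$ in the form needed here. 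Once this technical input is in hand, the remainder of Part 6 consists of the short semigroup argument above followed by an appeal to Part 3.
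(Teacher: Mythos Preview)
Your treatment of Parts 1--5 matches the paper's: Part 1 by direct substitution, Parts 2 and 4 via Krein--Rutman plus Schauder regularity, Parts 3 and 5 by citation. For Part 6 you arrive at the same conclusion by a route that is close in spirit but organized differently. The paper works at the level of the transition sub-density $p_t^\dagger(x,y)$: it applies Dynkin's formula to produce $\int_G \mathcal{L}f(y)\,p_s^\dagger(x,y)\,dy$, integrates by parts in $y$ to obtain $\mathcal{L}_y^* p_s^\dagger$, invokes the identity $\mathcal{L}_x p_s^\dagger = \mathcal{L}_y^* p_s^\dagger$ from \cite{sito2} (this is where Assumption \ref{ass:extsmoo} enters), and finally integrates by parts in $x$ against $\varphi$ to reach $\mathcal{L}^*\varphi = -\lambda\varphi$ and the ODE $w' = -\lambda w$. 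You instead work directly with the semigroup $u(t,x)=E_x[f(X(t))\mathbf{1}_{\tau_\partial>t}]$, use $u_t = \mathcal{L}u$, and perform a single spatial integration by parts $\int_G \varphi\,\mathcal{L}u = \int_G u\,\mathcal{L}^*\varphi$. This is more economical: it avoids the density identity \eqref{eq:llstar} altogether and relies only on parabolic Schauder regularity of $u(t,\cdot)$ up to $\partial G$ for $t>0$ together with $u|_{\partial G}=\varphi|_{\partial G}=0$, which already suffices to kill the boundary terms. In fact your argument suggests that Assumption \ref{ass:extsmoo} may be stronger than necessary, since the single integration by parts you need is justified once $u(t,\cdot)\in\mathcal{C}^2(\bar G)$, and this follows from standard interior-in-time parabolic estimates under Assumption \ref{assu:coeff} alone. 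The paper's route, by contrast, passes through the pointwise density relation \eqref{eq:llstar}, whose proof in \cite{sito2} requires the extra smoothness; the paper acknowledges this by remarking that Assumption \ref{ass:extsmoo} ``can be replaced by any set of conditions under which the relation \eqref{eq:llstar} holds.'' Both arguments then finish identically: approximate $\mathbf{1}_G$ to obtain $P_\mu(\tau_\partial>t)=e^{-\lambda t}$, divide, and invoke Part 3.
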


\subsection{Two Ergodic Control Problems}

\label{sec:ergcontprob} We now introduce two ergodic control problems, one
which is associated with the operator ${\mathcal{L}}$ and the other
associated with ${\mathcal{L}}^*_{\dens}$ where $\dens \in \Cmc^*$.

Consider the $d$-dimensional SDE on $(\Omega ,\mathcal{F},P,\{%
\mathcal{F}_{t}\}_{t\geq 0})$ 
\begin{equation}
dY(t)=b(Y(t))dt+\sigma (Y(t))dB(t)+\sigma (Y(t))u(t)dt,\;Y(0)=y\in G,
\label{contback}
\end{equation}%
where $B(t)$ is a $m$-dimensional $\Fmc_t$-Brownian motion,
$\{u(t),0\leq t<\infty \}$ is a ${\mathbb{R}}^{m}$-valued $\mathcal{F}%
_{t}$-progressively measurable process satisfying $\int_{0}^{T}\Vert
u(t)\Vert ^{2}dt<\infty $ for every $T<\infty $. The process $u$ is regarded
as a control process modifying the original dynamics in \eqref{eq:sdemodel}.
Such a control process is called an \emph{admissible control} (for initial
condition $y\in G$) if the following conditions are satisfied.

\begin{enumerate}[label={C.\arabic*}]

\item There is a unique continuous, ${\mathbb{R}}^d$-valued, $\mathcal{F}_t$%
-adapted process $Y$ that solves \eqref{contback}.

\item ${P}(Y(t) \in G \mbox{ for all } t \in [0,\infty)) = 1$.
\end{enumerate}

We denote the collection of all admissible controls by ${\mathcal{A}}(y)$.
Although previously the notation $P_y$ and $E_y$ was used for processes $X$ that start from $y$, with an abuse of notation we will continue to use this notation for probability measures (and expectations) under which $Y(0)=y$.
The cost associated with the control problem is defined as follows. For $%
y\in G$ and $u\in {\mathcal{A}}(y)$ 
\begin{equation}
J(y,u)\doteq \limsup_{T\rightarrow \infty }\frac{1}{T}{E}%
_{y}\int_{0}^{T}\left[ \frac{1}{2}\Vert u(t)\Vert ^{2}\right] dt.
\end{equation}%
The optimal cost, which could in principle depend on the initial
condition, is given as follows. For $y\in G$ 
\begin{equation}
\label{eqn:defofV}
J(y)\doteq \inf_{u\in {\mathcal{A}}(y)}J(y,u)=\inf_{u\in {\mathcal{A}}%
(y)}\limsup_{T\rightarrow \infty }\frac{1}{T}{E}_{y}\int_{0}^{T}%
\left[ \frac{1}{2}\Vert u(t)\Vert ^{2}\right] dt.
\end{equation}%
It will be shown in Theorem \ref{thm:ergcontandexitrate} (5) that in fact $%
J(y)$ is independent of $y\in G$. This is a well known fact for many types of ergodic control problems and the proof here is a variation on classical arguments that accounts for the requirement C.2 of admissible controls and the unboundedness of the control space.
The theorem will also give a
characterization of $J$ in terms of the following Hamilton-Jacobi-Bellman
(HJB) equation:%
\begin{equation}
\begin{aligned} \frac{1}{2}\mbox{tr}[a(x)D^2 \Gamma(x)] + \min_{u\in
\Rmb^m}\left[ (b(x) + \sigma(x) u)\cdot D \Gamma(x) + \frac{1}{2} \|u\|^2 \right]
&= \gamma, \;\; x \in G,\\ \Gamma(x) \to \infty &\mbox{ as } x \to \partial G.
\end{aligned}  \label{eq:hjbfirst}
\end{equation}%
By a solution $(\Gamma ,\gamma )$ of \eqref{eq:hjbfirst} we mean a $\Gamma \in {%
\mathcal{C}}^{2}(G)$ and $\gamma \in (0,\infty )$ that satisfy the first
line of the equation for each $x\in G$ and such that the divergence
statement in the second line holds.
In a more classical setup where either $G= \mathbb{R}^d$ or perhaps a reflecting boundary condition is used, $\gamma$ identifies the optimal ergodic cost regardless of the starting state, while $\Gamma$, which is typically unique only up to an additive constant,
gives what is called the {\it cost potential}. Under suitable conditions, the cost potential generates through the equation an optimal feedback control. 
The cost potential also indicates the impact of the initial condition over finite but large time intervals, 
in that heuristically one expects (and can under conditions show rigorously) that if $J_T(x)$ is the minimal cost over $[0,T]$ when starting at $x$ at time $t=0$,
then $J_T(x)-J_T(y) \rightarrow \Gamma(x) -\Gamma(y)$ as $T\rightarrow \infty$.

The following theorem relates the ergodic control problem \eqref{eqn:defofV}
with the
exit rate of the Markov process $X$ from the domain $G$. 

%
%

\begin{theorem}
\label{thm:ergcontandexitrate} Suppose Assumption \ref{assu:coeff} is
satisfied. Then the following hold.

\begin{enumerate}
\item Let $\psi$ and $\lambda$ be as given by Theorem \ref{thm:llstar}.
Define $\Psi \doteq -\log \psi$. Then the pair $(\Psi,\lambda)$ solves the
HJB equation \eqref{eq:hjbfirst}.

\item Suppose $(\tilde\Psi, \tilde\lambda) \in {\mathcal{C}}^2(G)\times
(0,\infty) $ is a solution of the HJB equation \eqref{eq:hjbfirst}. Then $%
\tilde\lambda = \lambda$ and for some $C \in {\mathbb{R}}$, $%
\Psi(x)-\tilde\Psi(x) = C$ for all $x \in G$.

\item Consider the control $\{u(t),0\leq t<\infty \}$ defined in feedback
form by
\begin{equation}
u(t)=-\sigma (Y(t))^{T}D\Psi (Y(t)),\;0\leq t<\infty ,  \label{eq:optcont}
\end{equation}%
where $Y$ is defined by \eqref{contback} with this choice of $u$,
namely 
\begin{equation}
dY(t)=b(Y(t))dt+\sigma (Y(t))dB(t)-a(Y(t))D\Psi (Y(t))dt,\;Y(0)=y\in G.
\label{contback2}
\end{equation}%
Then there is a unique solution to \eqref{contback2} which satisfies ${%
P}(Y(t)\in G\mbox{ for all }t\in \lbrack 0,\infty ))=1$. In
particular, $u\in {\mathcal{A}}(y)$.

\item The process $Y$
defines a Markov process on $G$ that 
has a unique invariant probability measure $\mu $ given by $\mu (dx)=\varphi
(x)\psi (x)dx$, where $\varphi$ is from Theorem \ref{thm:llstar}.

\item  $J(y)=\lambda $
for all $y\in G$.

\item  The admissible
control process defined by \eqref{eq:optcont} is optimal, and in fact 
\begin{equation}
\begin{aligned}
\lambda &=\lim_{T\rightarrow \infty }\frac{1}{T}{E}_{y}\int_{0}^{T}%
\frac{1}{2}\left[ D\Psi (Y(t))\cdot a(Y(t))D\Psi (Y(t))\right] dt.
\end{aligned}
\end{equation}%
Furthermore, the eigenvalue $\lambda $ can be determined from the invariant
measure $\mu $ by the formula 
\begin{equation}
\lambda =\frac{1}{2}\int_{G}[D\Psi (y)\cdot a(y)D\Psi (y)]\mu (dy).
\label{eq:lamdainv}
\end{equation}
\end{enumerate}
\end{theorem}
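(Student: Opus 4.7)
\emph{Plan.} The six parts split naturally into three groups. Parts (1) and (2) are handled by the substitution $\psi=e^{-\Psi}$, which linearizes \eqref{eq:hjbfirst}. Parts (3) and (4) concern the optimally controlled diffusion $Y$ and its invariant measure, and will be treated by Lyapunov localization together with a Doob $h$-transform identity. Parts (5) and (6) comprise the cost identities: the upper bound and the formula \eqref{eq:lamdainv} will follow from the ergodic theorem under the optimal feedback, while the matching lower bound is the main obstacle and will be addressed by a Girsanov/variational argument.

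For (1), pointwise minimization in the Hamiltonian of \eqref{eq:hjbfirst} gives $u^{*}(x)=-\sigma(x)^{T}D\Gamma(x)$, reducing the HJB to the semilinear PDE $\tfrac{1}{2}\mbox{tr}[aD^{2}\Gamma]+b\cdot D\Gamma-\tfrac{1}{2}D\Gamma\cdot aD\Gamma=\gamma$ in $G$. Using $D\Psi=-D\psi/\psi$ and $D^{2}\Psi=-D^{2}\psi/\psi+D\Psi(D\Psi)^{T}$, the eigenvalue relation $-\mathcal{L}\psi=\lambda\psi$ converts to exactly this PDE with $\gamma=\lambda$; the divergence $\Psi\to\infty$ at $\partial G$ follows from $\psi\in\mathcal{C}_{0}^{2,\alpha}(\bar G)$ with $\psi>0$ on $G$. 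For (2), given any $(\ti\Psi,\ti\lambda)\in\mathcal{C}^{2}(G)\times(0,\infty)$ solving \eqref{eq:hjbfirst}, I would set $\ti\psi\doteq e^{-\ti\Psi}$ and reverse the computation to obtain $-\mathcal{L}\ti\psi=\ti\lambda\ti\psi$ on $G$, with $\ti\psi>0$ on $G$ and $\ti\psi\to 0$ at $\partial G$. Schauder boundary estimates (using the $\mathcal{C}^{2,\alpha}$ boundary and the coefficient regularity in Assumption \ref{assu:coeff}) upgrade $\ti\psi$ to $\mathcal{C}_{0}^{2,\alpha}(\bar G)$, so the uniqueness statements in Theorem \ref{thm:llstar} force $\ti\lambda=\lambda$ and $\ti\psi=c\psi$ for some $c>0$, giving $\ti\Psi=\Psi-\log c$.

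For (3), the feedback drift $b-aD\Psi$ is smooth on $G$ but singular at $\partial G$, so standard strong existence and uniqueness apply up to the stopping times $\tau_{n}\doteq\inf\{t\geq 0:\Psi(Y(t))\geq n\}$ on the compact sublevel sets $\{\Psi\leq n\}\subset G$. Non-exit, $\tau_{n}\uparrow\infty$ almost surely, follows from Itô applied to $\Psi(Y)$: using $\mathcal{L}\Psi=\lambda+\tfrac{1}{2}D\Psi\cdot aD\Psi$ from (1) and cancellation with the feedback, the drift of $\Psi(Y)$ reduces to $\lambda-\tfrac{1}{2}D\Psi\cdot aD\Psi\leq\lambda$, so $E_{y}[\Psi(Y(T\wedge\tau_{n}))]\leq\Psi(y)+\lambda T$ and the Markov inequality gives $P(\tau_{n}\leq T)\leq(\Psi(y)+\lambda T-\inf_{G}\Psi)/n\to 0$. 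For (4), the generator of $Y$ can be written as $\mathcal{L}^{\psi}f=\psi^{-1}\mathcal{L}(\psi f)+\lambda f=M_{\psi}^{-1}\mathcal{L}M_{\psi}f+\lambda f$, the classical Doob $h$-transform with $h=\psi$. Taking the $L^{2}(dx)$-adjoint yields $(\mathcal{L}^{\psi})^{*}=M_{\psi}\mathcal{L}^{*}M_{\psi}^{-1}+\lambda$, and applied to $\psi\varphi$ this gives $\psi\mathcal{L}^{*}\varphi+\lambda\psi\varphi=-\lambda\psi\varphi+\lambda\psi\varphi=0$, so $\mu(dx)=\psi(x)\varphi(x)\,dx$ is invariant; uniqueness of $\mu$ follows from uniform ellipticity of $Y$ on the bounded set $G$ together with the finiteness of $\mu$.

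For (5)--(6), the key pointwise identity is the completed square $\mathcal{L}_{u}\Psi+\tfrac{1}{2}\|u\|^{2}=\lambda+\tfrac{1}{2}\|u-u^{*}(Y)\|^{2}$, where $u^{*}(y)=-\sigma(y)^{T}D\Psi(y)$. Itô localized at $\tau_{n}$ then gives the exact identity
\begin{equation*}
\tfrac{1}{2}E_{y}\!\int_{0}^{T\wedge\tau_{n}}\!\|u\|^{2}dt+E_{y}[\Psi(Y(T\wedge\tau_{n}))]=\Psi(y)+\lambda E_{y}[T\wedge\tau_{n}]+\tfrac{1}{2}E_{y}\!\int_{0}^{T\wedge\tau_{n}}\!\|u-u^{*}(Y)\|^{2}dt
\end{equation*}
for every $u\in\mathcal{A}(y)$. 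Taking $u=u^{*}$ kills the last term; passing to the limit and applying the ergodic theorem for $Y$ under $\mu$ (noting $D\Psi\cdot aD\Psi\in L^{1}(\mu)$ and $\Psi\in L^{1}(\mu)$, which both rest on the Hopf boundary rate for $\psi$ that makes $\psi|\log\psi|$ integrable near $\partial G$) produces $T^{-1}E_{y}[\Psi(Y(T))]\to 0$ and $\tfrac{1}{2T}E_{y}\!\int_{0}^{T}\!\|u^{*}\|^{2}dt\to\tfrac{1}{2}\int D\Psi\cdot aD\Psi\,d\mu$, and the identity then delivers both $J(y,u^{*})=\lambda$ and the formula \eqref{eq:lamdainv}. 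The main obstacle is the matching lower bound $J(y,u)\geq\lambda$ for arbitrary admissible $u$, since the naive Itô bound is spoiled by the potentially large term $E_{y}[\Psi(Y(T))]/T$. The cleanest route is variational: for any $u\in\mathcal{A}(y)$, Girsanov identifies the law of $Y$ on $[0,T]$ under $P$ with that of $X$ under an equivalent measure $Q_{u}$, with relative entropy $R(P\|Q_{u})=\tfrac{1}{2}E_{y}\!\int_{0}^{T}\!\|u\|^{2}dt$, and the Donsker--Varadhan representation gives
\begin{equation*}
-\log P_{y}(\tau_{\partial}>T)\leq R(P\|Q_{u})=\tfrac{1}{2}E_{y}\!\int_{0}^{T}\!\|u\|^{2}dt.
\end{equation*}
Dividing by $T$, letting $T\to\infty$, and using Theorem \ref{thm:llstar}(5) on the left gives $\lambda\leq J(y,u)$ for every $u\in\mathcal{A}(y)$, completing the proof.
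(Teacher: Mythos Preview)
Your treatment of parts (1)--(3) matches the paper's almost line for line: the log-transform, the Krein--Rutman uniqueness, and the Lyapunov bound on $\Psi(Y)$ are exactly what the paper does (the latter packaged there as a separate lemma). For part (4) you take a genuinely cleaner route: the paper verifies $\mathcal{L}_Y^*(\varphi\psi)=0$ by a brute-force expansion of all terms, whereas your Doob $h$-transform identity $\mathcal{L}_Y f=\psi^{-1}\mathcal{L}(\psi f)+\lambda f$ gives $(\mathcal{L}_Y)^*g=\psi\,\mathcal{L}^*(\psi^{-1}g)+\lambda g$ and hence $(\mathcal{L}_Y)^*(\psi\varphi)=\psi\mathcal{L}^*\varphi+\lambda\psi\varphi=0$ in one line. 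This is correct and buys a much shorter and more conceptual argument.

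The substantive divergence is in the lower bound $J(y)\geq\lambda$. The paper does \emph{not} use Girsanov or relative entropy; instead it approximates $G$ from the \emph{outside} by smooth domains $G_n\supset\bar G$, takes the principal eigenpair $(\lambda_n,\psi_n)$ of $-\mathcal{L}$ on $G_n$, and applies It\^o's formula to $\Psi_n=-\log\psi_n$. Since $\psi_n>0$ on $\bar G\subset G_n$, $\Psi_n$ is \emph{bounded} on $\bar G$, so for any admissible $u$ the term $T^{-1}E_y[\Psi_n(Y(T))]$ vanishes and one gets $J(y,u)\geq\lambda_n$ directly; then $\lambda_n\uparrow\lambda$ by domain monotonicity and continuity of the principal eigenvalue. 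Your entropy route is elegant and ties the bound directly to Theorem~\ref{thm:llstar}(5), but as written it has a gap: to identify the law of $Y$ under $Q_u$ with that of $X$ you need the Girsanov exponential to be a true martingale, and admissibility only gives $\int_0^T\|u\|^2dt<\infty$ a.s., not Novikov. This can be repaired (e.g.\ localize at $\rho_n=\inf\{t:\int_0^t\|u\|^2ds\geq n\}$, or invoke a Bou\'e--Dupuis type formula with an indicator cost approximated from below), but you should say how. The paper's exterior-approximation trick sidesteps this entirely at the cost of importing the eigenvalue continuity result $\lambda_n\to\lambda$; your approach avoids that import but needs the measure-change justified.
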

\begin{remark}
	\label{Qprocess}
	For fixed $0\le t \le T$, let $\{Y^T(s)\}_{0\le s \le t}$ be a $G$-valued stochastic process  with probability law $Q^{T,t} \in \mathcal{P}(\mathcal{C}([0,t]:G))$  given as 
	$$Q^{T,t}(A) \doteq P_y(X(\cdot) \in A \mid \tau_{\partial} >T), \;\; A \in \mathcal{B}(\mathcal{C}([0,t]:G)).$$
	In \cite{pin} it is shown that under certain conditions on the coefficients (see Hypothesis 1 and 2 therein)
	 which, for example, are satisfied when $a^{-1}b = D U$ for some $U \in \mathcal{C}^1(\bar G)$, as 
	 $T\to \infty$, $Q^{T,t}$ converges in $\mathcal{P}(\mathcal{C}([0,t]:G))$ to some $Q^t$, for every 
	 $t\in 
	 (0,\infty)$. This yields
	 a probability measure $Q$ on  $\mathcal{C}([0,\infty):G)$ such that the corresponding induced measure on $\mathcal{C}([0,t]:G)$ is $Q^t$ for every $t$.
	 Later works \cite{goqizh, chavil2} have proved similar results under more general conditions.
	 The corresponding process whose probability law is $Q$ is sometimes referred to as the $Q$-process (cf. \cite{chavil2}). The paper \cite{pin}  shows that the $Q$-process has the same law as the diffusion $Y$ defined by
	 \eqref{contback2} (see also \cite{goqizh} where the measure $Q$ is characterized through the semigroup of the associated Markov process). Thus the process given in \eqref{contback2} reduces, under  conditions of \cite{pin}, to the $Q$-process associated with the diffusion \eqref{eq:sdemodel} and the domain $G$.
\end{remark}
We next introduce an ergodic control problem that can be used to determine
the quasistationary distribution of $X$.

Recall the operator ${\mathcal{L}}_{\dens}^{\ast }$ from \eqref{lstardens}.
Then, for $f\in {\mathcal{C}}_{0}^{2}({\mathbb{R}}^{d})$
\begin{equation}
{\mathcal{L}}_{\dens}^{\ast }f(x) =  \tilde{\beta}(x)\cdot Df(x)+\frac{1}{%
2}\mbox{tr}\lbrack a(x)D^{2}f(x)]-\tilde{c}(x)f(x),\;x\in {\mathbb{R}}^{d},
\label{eq:generadjden}
\end{equation}%
where 
\begin{equation}\label{eq:tilbeta}
\tilde{\beta}(x)\doteq \beta (x)+\frac{1}{\dens(x)}a(x)D\dens(x)
\end{equation}%
and 
\begin{equation}
\tilde{c}(x) \doteq c(x)-\frac{1}{\dens(x)}\beta (x)\cdot D\dens(x)-\frac{1}{2\dens%
(x)}\mbox{tr}\lbrack a(x)D^{2}\dens(x)] . \label{eq:ctermn}
\end{equation}

Consider the $d$-dimensional SDE on $(\Omega ,\mathcal{F},P,\{%
\mathcal{F}_{t}\}_{t\geq 0})$ given by 
\begin{equation}
dZ(t)=\tilde{\beta}(Z(t))dt+\sigma (Z(t))dB(t)+\sigma
(Z(t))u(t)dt,\;Z(0)=z\in G.  \label{contbackstarden}
\end{equation}%
where $\{u(t),0\leq t<\infty \}$ is a ${\mathbb{R}}^{m}$-valued $\mathcal{F}%
_{t}$-progressively measurable process satisfying $\int_{0}^{T}\Vert
u(t)\Vert ^{2}dt<\infty $ for every $T<\infty $. Such a control process is
called an \emph{admissible control} (for initial condition $z\in G$) if the
following conditions are satisfied.

\begin{enumerate}[label={$\mbox{C}^{\prime }$.\arabic*}]

\item There is a unique continuous, ${\mathbb{R}}^d$-valued, $\mathcal{F}_t$%
-adapted process $Z$ that solves \eqref{contbackstarden}.

\item ${P}(Z(t) \in G \mbox{ for all } t \in [0,\infty)) = 1$.
\end{enumerate}

We denote the collection of all admissible controls as ${\mathcal{A}}%
^{* }(z)$. Once again we will write $P_z$ and $E_z$ for probabilities and expectations under which $Z(0)=z$ a.s.
The cost associated with the control problem is defined as
follows. For $z\in G$ and $u\in {\mathcal{A}}^{* }(z)$ 
\begin{equation}
\label{jstarzu}
J^{* }(z,u)\doteq \limsup_{T\rightarrow \infty }\frac{1}{T}{E}%
_{z}\int_{0}^{T}\left[ \frac{1}{2}\Vert u(t)\Vert ^{2}+\tilde{c}(Z(t))\right]
dt,
\end{equation}%
where $\tilde{c}$ is defined in \eqref{eq:ctermn}. The optimal cost $%
J^{* }$ of the control problem is given as follows. For $z\in G$ 
\begin{equation}
\label{eqn:erg_contr_adjoint}
J^{* }(z)\doteq \inf_{u\in {\mathcal{A}^*}(z)}J^{* }(z,u)=\inf_{u\in 
{\mathcal{A}^*}(z)}\limsup_{T\rightarrow \infty }\frac{1}{T}{E}%
_{z}\int_{0}^{T}\left[ \frac{1}{2}\Vert u(t)\Vert ^{2}+\tilde{c}(Z(t))\right]
dt.
\end{equation}%
Similar to the case of $J(y)$ in \eqref{eqn:defofV} and part 5 of Theorem \ref{thm:ergcontandexitrate}, in Theorem \ref{thm:ergcontandqsd} (5) it will be shown that the quantity $%
J^{* }(z)$ is independent of $z\in G$. Consider now the HJB equation
associated with the control problem \eqref{eqn:erg_contr_adjoint}:
\begin{equation}
\begin{aligned} \frac{1}{2}\mbox{tr}[a(x)D^2 \Gamma(x)] + \min_{u\in
\Rmb^d}\left[ (\tilde\beta(x) + \sigma(x) u)\cdot D \Gamma(x) + \tilde
c(x)+\frac{1}{2} \|u\|^2 \right] &= \gamma, \;\; x \in G,\\ \Gamma(x) \to
\infty &\mbox{ as } x \to \partial G. \end{aligned}  \label{eq:hjbfirststar}
\end{equation}%
As before, by a solution $(\Gamma ,\gamma )$ of \eqref{eq:hjbfirststar} we mean a $%
\Gamma \in {\mathcal{C}}^{2}(G)$ and $\gamma \in (0,\infty )$ that satisfy the
first line of the equation for each $x\in G$ and such that the divergence
statement in the second line holds.

Together with Theorem \ref{thm:llstar}, the following theorem relates this ergodic control problem with the QSD
of the Markov process $X$.

\begin{theorem}
\label{thm:ergcontandqsd} Suppose that Assumption \ref{assu:coeff}
is satisfied. Let $\dens \in \Cmc^*$.
Then the following hold.

\begin{enumerate}
\item Let $\tilde\varphi$ and $\lambda$ be as given by Theorem \ref%
{thm:llstar}. Define $\tilde\Phi \doteq -\log \tilde\varphi$. Then the pair $%
(\tilde\Phi,\lambda)$ solve the HJB equation  \eqref{eq:hjbfirststar}.

\item Suppose $(\bar\Phi, \bar\lambda) \in {\mathcal{C}}^2(G)\times (0,\infty)$
is a solution of the HJB equation  \eqref{eq:hjbfirst}. Then $\bar\lambda
= \lambda$ and for some $C \in {\mathbb{R}}$, $\tilde\Phi(x)-\bar\Phi(x) = C$
for all $x \in G$.

\item Consider the control $\{u(t),0\leq t<\infty \}$ defined in feedback
form by
\begin{equation}
u(t)=-\sigma (Z(t))^{T}D\tilde{\Phi}(Z(t)),\;0\leq t<\infty
\label{eq:optcontstar}
\end{equation}%
where $Z$ is defined by \eqref{contbackstarden} with this choice of $u$%
, namely 
\begin{equation}
dZ(t)=\tilde{\beta}(Z(t))dt+\sigma (Z(t))dB(t)-a(Z(t))D\tilde{\Phi}%
(Z(t))dt,\;Z(0)=z\in G.  \label{contbackstar2}
\end{equation}%
Then there is a unique solution to \eqref{contbackstar2} which satisfies 
$P_z(Z(t)\in G\mbox{ for all }t\in \lbrack 0,\infty ))=1$. In
particular, $u\in {\mathcal{A}}^{* }(z)$.

\item The process $Z$ in \eqref{contbackstar2} defines a Markov process on $G$ that 
has a unique invariant probability measure  $\mu $ given by $\mu (dx)=\varphi
(x)\psi (x)dx$, where $\varphi$ and $\psi$ are from Theorem \ref{thm:llstar}.

\item The optimal cost
satisfies $J^{* }(z)=\lambda $ for all $z\in G$.

\item  The admissible
control process defined by \eqref{eq:optcontstar} is optimal, and in fact 
\begin{align*}
\lambda 
& =\lim_{T\rightarrow \infty }\frac{1}{T}{E}\int_{0}^{T}\left[ 
\frac{1}{2}D\tilde\Phi (Z(t))\cdot a(Z(t))D\tilde\Phi (Z(t))+\tilde{c}(Z(t))\right] dt.
\end{align*}%
Furthermore, the eigenvalue $\lambda $ can be determined from the invariant
measure $\mu$ by the formula 
\begin{equation}
\lambda =\frac{1}{2}\int_{G}\left[ D\tilde{\Phi} (y)\cdot a(y)D\tilde{\Phi} (y)+\tilde{c}%
(y)\right] \mu (dy).  \label{eq:lamdainv2}
\end{equation}
\end{enumerate}
\end{theorem}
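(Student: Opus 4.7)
The plan is to mirror the strategy that evidently underlies Theorem \ref{thm:ergcontandexitrate}, exploiting the log-transform that converts the Dirichlet eigenvalue equation into the HJB equation. For part 1, write the eigenvalue relation $-\Lmc^*_\dens \tilde\varphi = \lambda \tilde\varphi$ in expanded form using \eqref{eq:generadjden}, then substitute $\tilde\varphi = e^{-\tilde\Phi}$. The identities $D\tilde\varphi/\tilde\varphi = -D\tilde\Phi$ and $D^2\tilde\varphi/\tilde\varphi = -D^2\tilde\Phi + D\tilde\Phi\, D\tilde\Phi^T$ yield, after dividing by $\tilde\varphi$, the relation $\frac{1}{2}\tr[a D^2 \tilde\Phi] + \tilde\beta\cdot D\tilde\Phi - \frac{1}{2} D\tilde\Phi\cdot a D\tilde\Phi + \tilde c = \lambda$ on $G$. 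The minimum in \eqref{eq:hjbfirststar} is attained at $u^* = -\sigma^T D\Gamma$ and equals $\tilde\beta\cdot D\Gamma + \tilde c - \frac{1}{2} D\Gamma\cdot a D\Gamma$, so this is exactly \eqref{eq:hjbfirststar} with $(\Gamma,\gamma)=(\tilde\Phi,\lambda)$, and $\tilde\Phi(x)=-\log\tilde\varphi(x) \to \infty$ on $\partial G$ since $\tilde\varphi\in\Cmc^{2,\alpha}_0(\bar G)$ is positive in $G$.

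For part 2, I would invert the transformation: given a solution $(\bar\Phi,\bar\lambda)$, set $\bar\varphi \doteq e^{-\bar\Phi}\in\Cmc^2(G)$. Carrying out the algebra in reverse shows $-\Lmc^*_\dens\bar\varphi = \bar\lambda \bar\varphi$ in $G$, while $\bar\Phi(x)\to\infty$ on $\partial G$ forces $\bar\varphi\to 0$ on $\partial G$ and $\bar\varphi$ is bounded. Standard Schauder boundary regularity (using Assumption \ref{assu:coeff}) promotes $\bar\varphi$ to $\Cmc^{2,\alpha}_0(\bar G)$, and since $\bar\varphi >0$ in $G$ it is a strictly positive eigenvector. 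Theorem \ref{thm:llstar} parts 1, 2 and 4 then force $\bar\lambda = \lambda$ and $\bar\varphi = c\tilde\varphi$ for some $c>0$, i.e.\ $\bar\Phi - \tilde\Phi\equiv \log c$.

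For parts 3 and 4, note that on compact subsets of $G$ the coefficients of \eqref{contbackstar2} are locally Lipschitz, so local existence and uniqueness are standard; set $\tau_n\doteq \inf\{t: \dist(Z(t),\partial G)\le 1/n\}$. Apply It\^o to $\tilde\Phi(Z)$ and use the identity derived in part 1 (with $u^*=-\sigma^T D\tilde\Phi$) to get $E_z[\tilde\Phi(Z(t\wedge\tau_n))] \le \tilde\Phi(z) + (\lambda + \|\tilde c\|_\infty)t$; since $\tilde\Phi(Z(\tau_n))\to\infty$ on $\{\tau_n\le t\}$, this forces $\tau_n\uparrow\infty$ a.s., establishing C$'$.1--C$'$.2. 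For part 4, I compute the generator of $Z$, namely $\Mmc f = (\tilde\beta - a D\tilde\Phi)\cdot Df + \frac{1}{2}\tr[a D^2 f]$, and verify the key Doob-$h$-transform identity $\Mmc f = \frac{1}{\tilde\varphi}\Lmc^*_\dens(\tilde\varphi f) + \lambda f$. Then, using $\varphi = \tilde\varphi \dens$ and $\Lmc^*_\dens g = \dens^{-1}\Lmc^*(\dens g)$, together with the duality $\int_G \Lmc^*(\varphi f)\psi\, dx = -\lambda\int_G \varphi f\psi\, dx$ (valid for $f\in\Cmc^2_c(G)$ since $\Lmc\psi = -\lambda\psi$ and boundary terms vanish), one obtains $\int_G \Mmc f\, d\mu = 0$, proving invariance of $\mu(dx)=\varphi\psi\, dx$. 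Uniqueness and ergodicity follow because $a$ is uniformly elliptic on $G$ and $Z$ never exits $\bar G$, giving strong Feller and irreducibility of the process on $G$.

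For parts 5 and 6, I would carry out a verification argument. For any $u\in\Amc^*(z)$, It\^o's formula applied to $\tilde\Phi(Z)$ and the HJB inequality $(\tilde\beta + \sigma u)\cdot D\tilde\Phi + \tilde c + \frac{1}{2}\|u\|^2 \ge \lambda - \frac{1}{2}\tr[a D^2\tilde\Phi]$ yield, after localization at $\tau_n$ and taking $n\to\infty$,
\begin{equation*}
E_z\!\int_0^T\!\left[\tfrac{1}{2}\|u(t)\|^2 + \tilde c(Z(t))\right]\!dt \;\ge\; \lambda T + \tilde\Phi(z) - E_z\!\left[\tilde\Phi(Z(T))\right].
\end{equation*}
Since $\tilde\Phi$ is bounded below on $G$, dividing by $T$ and letting $T\to\infty$ gives $J^*(z,u)\ge \lambda$. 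Equality under $u^*$ is obtained from the ergodic theorem on the Markov process of part 4: $\frac{1}{T}\int_0^T[\frac{1}{2}D\tilde\Phi\cdot a D\tilde\Phi + \tilde c](Z)ds \to \int_G[\frac{1}{2}D\tilde\Phi\cdot a D\tilde\Phi + \tilde c]\,d\mu$, and the value of this integral equals $\lambda$ by applying $\Mmc$ to $\tilde\Phi$ (using the HJB identity with $u^*$) and invariance $\int \Mmc \tilde\Phi\, d\mu = 0$, giving \eqref{eq:lamdainv2}. The main obstacle is the upper boundary term $E_z[\tilde\Phi(Z(T))]/T$: while a lower bound on $\tilde\Phi$ suffices for the lower-bound direction, one must be careful with convergence of $\int \Mmc\tilde\Phi\, d\mu$, which I would handle by combining a uniform integrability argument (using bounds on $E_z[\tilde\Phi(Z(t))]$ derived from part 3's Lyapunov estimate) with standard approximation of $\tilde\Phi$ by truncations in $\Cmc^2_c(G)$.
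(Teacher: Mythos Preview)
Your approach to parts 1--4 is essentially correct and close to the paper's (for part 4 the paper computes $\Lmc_Z^*[\varphi\psi]=0$ directly rather than via the Doob-transform identity, but your route is equally valid). The real issue is in the lower bound of part 5.

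You claim that for an arbitrary admissible $u$, It\^o's formula gives
\[
E_z\!\int_0^T\!\Bigl[\tfrac{1}{2}\|u\|^2+\tilde c(Z)\Bigr]dt \;\ge\; \lambda T + \tilde\Phi(z) - E_z[\tilde\Phi(Z(T))],
\]
and that a lower bound on $\tilde\Phi$ suffices to conclude $J^*(z,u)\ge\lambda$. This is backwards. A lower bound $\tilde\Phi\ge M$ yields $-E_z[\tilde\Phi(Z(T))]\le -M$, which bounds the right side from \emph{above}, not below. Since $\tilde\Phi$ diverges at $\partial G$, for a general admissible control there is nothing preventing $E_z[\tilde\Phi(Z(T))]$ from growing linearly in $T$, in which case the inequality gives no information. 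Your Lyapunov estimate from part 3 does not help here, because it was derived for the \emph{optimal} feedback dynamics, not for an arbitrary $u\in\Amc^*(z)$.

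The paper avoids this difficulty by an outer approximation: it takes $\Cmc^{2,\alpha}$ domains $G_n\supset G_{n+1}\supset \bar G$ with $\dist(G_n^c,G)\to 0$, lets $(\lambda_n,\tilde\varphi_n)$ be the principal Dirichlet eigenpair of $-\Lmc^*_\dens$ on $G_n$, and sets $\tilde\Phi_n=-\log\tilde\varphi_n$. Because $\tilde\varphi_n>0$ on $\bar G_n\supset \bar G$, the function $\tilde\Phi_n$ is \emph{bounded} on $\bar G$, so applying the verification argument with $\tilde\Phi_n$ in place of $\tilde\Phi$ gives $J^*(z,u)\ge\lambda_n$ for every admissible $u$. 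One then invokes domain monotonicity and continuity of the principal eigenvalue (e.g.\ Berestycki--Rossi) to get $\lambda_n\to\lambda$, completing the lower bound. This outer-domain trick is the missing idea in your proposal.
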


\begin{remark}$\;$
\label{rem:longrem}
\begin{enumerate}
	\item Although the $G$-valued diffusion $Y$ in \eqref{contback2} has been previously studied in relation to $Q$-processes (e.g. in \cite{pin}), to the best of our knowledge, the diffusion identified in \eqref{contbackstar2} has not appeared in previous literature on quasistationary distributions.
	\item Note that for every $z\in G$, with $\Phi = -\log \varphi$,
$$\tilde \beta(z) -a(z)D\tilde{\Phi}(z) = \beta(z) - a(z)D{\Phi}(z).$$
Thus the equation for the process $Z$ in \eqref{contbackstar2} can be written as
\begin{equation}\label{eq:newzeq}
dZ(t)={\beta}(Z(t))dt+\sigma (Z(t))dB(t)-a(Z(t))D{\Phi}(Z(t))dt,\;Z(0)=z\in G.  
\end{equation}%
It follows that the optimally controlled process $Z$ is the same for all $\dens \in \Cmc^*$. However note that the control problem formulated in \eqref{jstarzu}- \eqref{eqn:erg_contr_adjoint} is different for different choices of $\dens$. To see this we rewrite the controlled evolution as 
\begin{align*}
dZ(t)&=\tilde{\beta}(Z(t))dt+\sigma (Z(t))dB(t)+\sigma
(Z(t))u(t)dt\\
&={\beta}(Z(t))dt+\sigma (Z(t))dB(t)+\sigma(Z(t))\left[u(t) + \frac{1}{\dens(Z(t))}\sigma^T(Z(t))D\dens(Z(t))\right]dt\\
&= {\beta}(Z(t))dt+\sigma (Z(t))dB(t)+\sigma(Z(t))\tilde u(t) dt
\end{align*}%
where
$$\tilde u(t) \doteq u(t) + \frac{1}{\dens(Z(t))}\sigma^T(Z(t))D\dens(Z(t)).$$
The expectation of the integrand in the cost function can be rewritten as
\begin{align*}
	E_{z}\left[ \frac{1}{2}\Vert u(t)\Vert ^{2}+\tilde{c}(Z(t))\right]
	&={E}_{z}\left[ \frac{1}{2}\left\Vert \tilde u(t)- \frac{1}{\dens(Z(t))}\sigma^T(Z(t))D\dens(Z(t))\right\Vert^{2}\right. \\
	&\quad +\left. 
	c(Z(t)) -\frac{1}{\dens(Z(t))}\beta (Z(t))\cdot D\dens(Z(t))-
	\frac{1}{2\dens(Z(t))}\rm{tr}\lbrack a(Z(t))D^{2}\dens(Z(t))]\right].
\end{align*}
With this rewriting, the controlled state dynamics is the same across all $\dens \in \Cmc^*$, however as seen from the above display, the cost function depends on the first two derivatives of $\dens$.
\item 
We note the somewhat surprising fact, previously remarked on in \cite{goqizh}, that
according to part 4 of Therorem \ref{thm:ergcontandqsd} and part 4 of Theorem \ref{thm:ergcontandexitrate},
the stationary distributions of the two optimally controlled processes,
one associated with ${\mathcal{L}}$ and the other
with ${\mathcal{L}}^*_{\dens}$,
are in fact the same.
Indeed, there is a close connection between the diffusions in \eqref{contback2}
and \eqref{contbackstar2}.
Specifically, denoting the infinitesimal generators of the diffusions in \eqref{contback2} and \eqref{contbackstar2}
by $\Lmc_Y$ and $\Lmc_Z$, it can be checked that, for 
$f\in {\mathcal{C}}_{0}^{2}(G)$,
\begin{equation}\Lmc_Z(f) = \frac{1}{\eta}\Lmc_Y^*(\eta f),\label{eq:llstarn}\end{equation}
where $\eta = \varphi \psi$ is the invariant density of the two diffusions. To see this note that, for $f\in {\mathcal{C}}_{0}^{2}(G)$,
\begin{align*}
\Lmc_Y^*(f) &= 
\hat{\beta}\cdot Df + \frac{1}{2}{\rm tr}\lbrack aD^{2}f] -\hat{c}f
\end{align*}
where 
$$\hat{\beta} = \beta + a D\Psi, \;\; 
\hat{c} = c - \sum_{i=1}^d [(aD\Psi)_i]_{x_i}.
$$
Thus
\begin{align*}
\frac{1}{\eta}\Lmc_Y^*(\eta f) &= 
\frac{1}{\eta}\left[\hat{\beta}\cdot D(\eta f) + 
\frac{1}{2}{\rm tr}\lbrack aD^{2}(\eta f)] - \hat{c}\eta f
\right]\\
&= \hat{\beta} Df + \frac{1}{2}{\rm tr}\lbrack aD^{2}f]
+ \frac{1}{\eta} D\eta \cdot a Df + \frac{1}{\eta}f\Lmc_Y^*(\eta)\\
&= \beta Df + \frac{1}{2}{\rm tr}\lbrack aD^{2}f]
- Df\cdot a D\Phi\\
&= \Lmc_Z(f),
\end{align*}
where the third equality follows on recalling that $\eta$ is the invariant density for $Y$, and the definitions of $\beta^*$
and $\eta$.

The equality in \eqref{eq:llstarn} suggests that the stationary optimal $Y$ is simply the time-reversal of the stationary optimal $Z$. More precisely, consider the processes
$Y$ (resp. $Z$) solving \eqref{contback2} (resp. \eqref{contbackstar2}) with $Y(0)$ distributed as $\mu$ (resp. $Z(0)$ distributed as $\mu$).
Fix $T<\infty$ and for $0\le t \le T$, define $\tilde Z(t) \doteq Y(T-t)$. Then the distributions of $Z$ and $\tilde Z$ on $\mathcal{C}([0,T]:G)$ are the same.
We give the following formal argument, which we believe can be rigorized. It suffices to check that the generator of $Z$ and $\tilde Z$ restricted to $\mathcal{C}_0^2(\bar G)$ are the same.
Denote the transition probability semigroups of $Y$ and $\tilde Z$ by $\{T_t\}$ and $\{\tilde T_t\}$ respectively. Then for $0\le s \le t+s \le T$ and $f,g \in \mathcal{C}_0^2(\bar G)$
$$
E(g(Y(s))f(Y(t+s))) = \int_{G} g(y) T_tf(y)\eta(y) dy = \int_{G} f(y)\tilde T_t g(y) \eta(y) dy.$$
Substracting $\int_{G} f(y) g(y) \eta(y) dy$ from both sides and dividing by $t$ we get
$$
\int_{G} g(y) \frac{[T_tf(y) - f(y)]}{t}\eta(y) dy = \int_{G} f(y)\frac{[\tilde T_t g(y) - g(y)]}{t} \eta(y) dy.$$
Sending $t\to 0$
$$\int_{G} g(y) \mathcal{L}_Yf(y )\eta(y) dy = \int_{G} f(y)\mathcal{L}_{\tilde Z}g(y) \eta(y) dy$$
where $\mathcal{L}_{\tilde Z}$ is the generator of $\tilde Z$.
Noting from \eqref{eq:llstarn} that
$$\int_{G} g(y) \mathcal{L}_Yf(y )\eta(y) dy = \int_{G} \frac{1}{\eta(y)}\mathcal{L}_Y^* [\eta g](y) f(y)\eta(y) dy  = \int_{G} \mathcal{L}_Z g(y) f(y)\eta(y) dy$$
we now see that
$$\int_{G} \mathcal{L}_Z g(y) f(y)\eta(y) dy = \int_{G} f(y)\mathcal{L}_{\tilde Z}g(y) \eta(y) dy$$
for all $f,g \in \mathcal{C}_0^2(\bar G)$ which shows that the generator of $Z$ and $\tilde Z$ restricted to $\mathcal{C}_0^2(\bar G)$ are the same.

In particular when $\Lmc_Y$ is selfadjoint on $L^2(\mu)$, it turns out that the processes $Y$ in \eqref{contback2} and $Z$ in \eqref{contbackstar2} are the same. As a simple example consider the one dimensional diffusion on the bounded interval $[0,K]$ with absorption at $0$ and $K$, given as
 $$X(t) = x+B(t)-at, \; t \in [0, \infty).$$
 In this example, it is easy to check that 
 $\Lmc_Y$ is selfadjoint on $L^2(\mu)$ and both of the optimally controlled processes $Y$ and $Z$ are governed by the stochastic differential equation
 $$dU(t) = \frac{\pi}{K}\cot\left(\frac{\pi}{K} U(t)\right) dt + dB(t).$$
\end{enumerate}
\end{remark}
\section{Proofs}
\label{sec:secproofs}

\subsection{Proof of Theorem \protect\ref{thm:llstar}}
\label{sec:proofthmfirst}
Part 1 is immediate from \eqref{lstardens}.
From the Krein-Rutman
theorem (cf. \cite[Theorems 1.1--1.4]{du2006order}) it follows that
there is a $\lambda \in (0,\infty )$ which is a simple  eigenvalue
of both $-{\mathcal{L}}$ and $-{\mathcal{L}}^{\ast }$ and any other
 eigenvalue $\tilde{\lambda}$ of these operators satisfies $%
\mbox{Re}(\tilde{\lambda})\geq \lambda $. This together with part 1 proves part 2. 
For part 3 see  \cite[Theorem 1.1]{champagnat2017general}.
Next denote by 
$\psi $ and $\varphi $ the unique eigenvectors of $-{\mathcal{L}}$ and $-{%
\mathcal{L}}^{\ast }$, respectively, associated with $\lambda$. Then again from the Krein-Rutman
theorem (cf. \cite[Theorem 1.3]{du2006order}) these functions are strictly
positive and (cf. \cite[Theorem 6.14]{giltru}) are in ${\mathcal{C}}_{0}^{2,\alpha }(\bar{G})$. We normalize
these eigenvectors such that 
\begin{equation}
\int_{G}\varphi (x)dx=1\mbox{ and }\int_{G}\psi (x) \nu(dx)=1.
\end{equation}%
This together with part 1 completes the proof of part 4.

Part 5 follows from 
\cite[Theorem 1.1]{champagnat2017general} (see also \cite[Theorem 3]{goqizh}) on recalling the normalization used to define $\psi$.

We now prove part 6. For this it suffices to show that for all $f\in
\mathcal{C}_{0}^{2}(G)$ and $t>0$, with $\tilde \nu(dx) = \varphi(x) dx$.
\begin{equation}
\frac{E_{\tilde \nu }[f(X(t))1_{\tau _{\partial }>t}]}{E_{\tilde \nu }[1_{\tau _{\partial
}>t}]}=\langle f,\tilde \nu \rangle ,  \label{eq: toshow}
\end{equation}%
where $\langle f,\tilde \nu \rangle \doteq \int_G f(x)\tilde{\nu}(dx).$

To see that it suffices to work with $f\in
\mathcal{C}_{0}^{2}(G)$, note that 
\begin{equation*}
    P_{\tilde \nu }(X(t) \in \partial G, \tau_{\partial }>t)=0 \mbox{ and }\tilde \nu(\partial G)=0.
\end{equation*}
Using the dominated convergence theorem, given any $f$ that is bounded and measurable we can approximate by functions with the same bound which are nonzero only in compact subsets of $G$.
These in turn can be approximated by elements of $\mathcal{C}_{0}^{2}(G)$ using standard methods.

We now show \eqref{eq: toshow} for $f\in \mathcal{C}_{0}^{2}(G)$.
Let $X^{\dagg}(t)$ be the process defined as $X^{\dagg}(t)\doteq X(t\wedge
\tau _{\partial })$ and let $p_{t}^{\dagg}(x,y)$ denote the transition
subprobability density of the process restricted to $G$, namely for $f\in {%
\mathcal{C}}_{0}^{2}(G)$, 
\begin{equation*}
E_{x}(f(X^{\dagg}(t))=\int_{G}f(y)p_{t}^{\dagg}(x,y)dy.
\end{equation*}%
For a function $h(x,y)$ in ${\mathcal{C}}^{2}(G\times G)$, denote by ${%
\mathcal{L}}_{x}h$ the function obtained by regarding ${\mathcal{L}}$ as a
differential operator in the variable $x$. Define ${\mathcal{L}}_{y}h$ in a
similar fashion. Then for all $x,y\in G$ and for all $t>0$ (see \cite[%
Theorem 1]{sito2}) 
\begin{equation}
{\mathcal{L}}_{x}p_{t}^{\dagg}(x,y)={\mathcal{L}}_{y}^{\ast }p_{t}^{\dagg%
}(x,y).  \label{eq:llstar}
\end{equation}%
Next note that for $f\in \mathcal{C}_{0}^{2}(G)$ 
\begin{align*}E_{\tilde\nu }[f(X^{\dagg%
}(t))] &= E_{\tilde\nu }[f(X(t\wedge \tau _{\partial }))]
= \langle f,\tilde\nu \rangle + 
E_{\tilde\nu }\left[ \int_0^{t\wedge \tau _{\partial }}
\mathcal{L}f(X(s)) ds \right]\\
&= \langle f,\tilde\nu \rangle + 
 \int_0^{t }
E_{\tilde\nu }\left[\mathcal{L}f(X^{\dagg}(s))  \right]ds.
\end{align*}
Thus
\begin{align*}
E_{\tilde\nu }[f(X(t))1_{\tau _{\partial }>t}]& =E_{\tilde\nu }[f(X^{\dagg%
}(t))]\\
& =\langle f,\tilde\nu \rangle+\int_{0}^{t}\int_{G}\int_{G}{\mathcal{L}}%
f(y)p_{s}^{\dagg}(x,y)dy\varphi (x)dxds \\
& =\langle f,\tilde\nu \rangle +\int_{0}^{t}\int_{G}\int_{G}f(y){\mathcal{L}}%
_{y}^{\ast }p_{s}^{\dagg}(x,y)dy\varphi (x)dxds.
\end{align*}
Using \eqref{eq:llstar} we have
\begin{align*}
E_{\tilde\nu }[f(X(t))1_{\tau _{\partial }>t}]
& =\langle f,\tilde\nu \rangle +\int_{0}^{t}\int_{G}\int_{G}f(y){\mathcal{L}}%
_{x}p_{s}^{\dagg}(x,y)dy\varphi (x)dxds \\
& =\langle f,\tilde\nu \rangle +\int_{0}^{t}\int_{G}f(y)\int_{G}p_{s}^{\dagg}(x,y){%
\mathcal{L}}^{\ast }\varphi (x)dxdyds \\
& =\langle f,\tilde\nu \rangle -\lambda \int_{0}^{t}\int_{G}f(y)\int_{G}p_{s}^{%
\dagg}(x,y)\varphi (x)dxdyds \\
& =\langle f,\tilde\nu \rangle -\lambda \int_{0}^{t}E_{\tilde\nu }[f(X(s))1_{\tau
_{\partial }>s}]ds.
\end{align*}%
This shows that 
\begin{equation}
E_{\tilde\nu }[f(X(t))1_{\tau _{\partial }>t}]=e^{-\lambda t}\langle f,\tilde\nu \rangle .
\label{eq:odesol}
\end{equation}%
Now let $g_{n}\in \mathcal{C}_{0}^{2}(G)$ be such that $\sup_{n,x}|g_{n}(x)|<\infty $
and $g_{n}(x)\rightarrow 1$ for all $x\in G$. Then from \eqref{eq:odesol},
for all $n$ 
\begin{equation}
E_{\tilde\nu }[g_{n}(X(t))1_{\tau _{\partial }>t}]
=e^{-\lambda t}\langle g_{n},\tilde\nu \rangle.  \label{eq:odesolgn}
\end{equation}%
Sending $n\rightarrow \infty $, by the dominated convergence theorem
\begin{equation}
\Pmb_{\tilde\nu }(\tau _{\partial }>t)=\Emb_{\tilde\nu }[1_{\tau _{\partial
}>t}]=\lim_{n\rightarrow \infty }\Emb_{\tilde\nu }[g_{n}(X(t))1_{\tau _{\partial }>t}]=e^{-\lambda
t}\lim_{n\rightarrow \infty }\langle g_{n},\tilde\nu \rangle =e^{-\lambda t}.
\label{eq:statesit}
\end{equation}
Substituting this in \eqref{eq:odesol} we obtain \eqref{eq: toshow}, which
completes the proof of 6.

 \hfill \qed
\subsection{A technical lemma}
The following lemma will be used to show that  certain stochastic differential equations with drift functions given in terms of a solution of 
\eqref{eq:hjbfirst} or a solution of \eqref{eq:hjbfirststar}  have unique pathwise solutions that remain in the domain $G$ at all times.
\begin{lemma}
\label{lem:lemstayin} 
Under Assumption \ref{assu:coeff} the following hold.
\begin{enumerate}[label={(\alph*)}]
\item Let $\{Y(t)\}$ solve the  SDE 
\begin{equation*}
dY(t)=b(Y(t))dt+\sigma (Y(t))dB(t)-a (Y(t))D\Gamma (Y(t))dt, 
\end{equation*}%
where $(\Gamma ,\gamma )$ is a solution of \eqref{eq:hjbfirst}. 
For $\delta\in (0,\infty)$ let
\begin{equation*}
\tau_{\delta} \doteq \inf \{t>0: \mbox{\rm dist}(Y(t), \partial G) \le \delta\}.
\end{equation*}%
Then for $y \in G$ and any $T\in (0,\infty )$, $P_y\{\tau_{\delta} \leq T\}\to 0$ as $\delta \to 0$.
\item Let $\{Z(t)\}$ solve the  SDE 
\begin{equation*}
dZ(t)=\tilde \beta(Z(t))dt+\sigma (Z(t))dB(t)-a (Z(t))D\Gamma (Z(t))dt,
\end{equation*}%
where $\tilde \beta$ is as in \eqref{eq:tilbeta} and $(\Gamma ,\gamma )$ is a solution of \eqref{eq:hjbfirststar}.
For $\delta\in (0,\infty)$ define $\tau_{\delta}$ as in part (a) but with $Y$  replaced with $Z$.
Then for $z \in G$  and any $T\in (0,\infty )$, $P_z\{\tau_{\delta} \leq T\}\to 0$ as $\delta \to 0$.
\end{enumerate}
\end{lemma}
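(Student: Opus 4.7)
My plan is to treat $\Gamma$ itself as a Lyapunov function. The hypothesis that $\Gamma(x)\to \infty$ as $x\to \partial G$ says precisely that reaching the boundary is expensive, so if I can control $E[\Gamma(Y(t\wedge \tau_\delta))]$ uniformly in $\delta$, I immediately get the desired tail bound on $\tau_\delta$. The key algebraic input is that plugging the optimizer $u^* = -\sigma^T D\Gamma$ into the minimum in \eqref{eq:hjbfirst} gives the pointwise identity
\begin{equation*}
\tfrac{1}{2}\mathrm{tr}[a(x)D^2\Gamma(x)] + b(x)\cdot D\Gamma(x) - \tfrac{1}{2}D\Gamma(x)\cdot a(x)D\Gamma(x) = \gamma, \quad x \in G,
\end{equation*}
so that the \emph{drift} of $\Gamma(Y(t))$ under \eqref{contback2} simplifies, via It\^o's formula, to $\gamma - \tfrac{1}{2}D\Gamma\cdot a D\Gamma \le \gamma$. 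The analogous computation for part (b), using \eqref{eq:hjbfirststar}, yields drift $\gamma - \tilde c - \tfrac{1}{2}D\Gamma\cdot a D\Gamma$, which is bounded above by $\gamma + \sup_{\bar G}|\tilde c| < \infty$ since $\tilde c \in \mathcal{C}(\bar G)$.

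The first step is local existence: because $\Gamma \in \mathcal{C}^2(G)$, the drift coefficient $b - aD\Gamma$ (resp.\ $\tilde\beta - aD\Gamma$) is locally Lipschitz on $G$, so standard SDE theory gives a unique strong solution on $[0,\tau_0)$ where $\tau_0 \doteq \lim_{\delta \downarrow 0}\tau_\delta$; the solution exits every compact subset of $G$ before it can leave $G$, which is exactly the content of the stopping times $\tau_\delta$. The second step is to apply It\^o's formula to $\Gamma(Y(\cdot))$ stopped at $t\wedge \tau_\delta$: on this stopped interval the process lies in the compact set $\{x \in \bar G : \mathrm{dist}(x,\partial G) \ge \delta\}$, on which both $D\Gamma$ and $\sigma$ are bounded, so the martingale part has mean zero. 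Combined with the drift identity above,
\begin{equation*}
E_y\bigl[\Gamma(Y(t\wedge \tau_\delta))\bigr] \le \Gamma(y) + \gamma\, T, \qquad 0\le t\le T.
\end{equation*}

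The third step converts this into the desired probability estimate. Set $M(\delta) \doteq \inf\{\Gamma(x) : x\in G,\ \mathrm{dist}(x,\partial G) \le \delta\}$; by the divergence condition, $M(\delta)\to \infty$ as $\delta\to 0$. Also, $\Gamma$ is bounded below on $G$: each sublevel set $\{\Gamma\le M\}$ is closed in $G$ and, because $\Gamma\to\infty$ at $\partial G$, bounded away from $\partial G$, hence compact, so $m\doteq \inf_G \Gamma > -\infty$. On the event $\{\tau_\delta \le T\}$, $\Gamma(Y(\tau_\delta))\ge M(\delta)$, and thus
\begin{equation*}
M(\delta)\,P_y(\tau_\delta \le T) + m\,(1-P_y(\tau_\delta \le T)) \le E_y[\Gamma(Y(T\wedge \tau_\delta))] \le \Gamma(y) + \gamma T,
\end{equation*}
which rearranges to $P_y(\tau_\delta \le T) \le (\Gamma(y) + \gamma T - m)/(M(\delta) - m) \to 0$ as $\delta\to 0$. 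Part (b) is proved identically, with the bound $\Gamma(z) + (\gamma + \sup_{\bar G}|\tilde c|)T$ on the right in the Lyapunov inequality.

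The main obstacle, and really the only delicate point, is the justification of the vanishing of the stochastic-integral expectation in step two: the drift $aD\Gamma$ blows up on $\partial G$, so I cannot take the expectation globally. The use of the stopping time $\tau_\delta$ is essential precisely because it keeps the integrand $D\Gamma(Y)^T\sigma(Y)$ bounded (by continuity on the compact set $\{\mathrm{dist}(\cdot,\partial G)\ge \delta\}$). Once this localization is in place, the rest is a standard Dynkin/Lyapunov estimate.
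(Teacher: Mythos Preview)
Your proposal is correct and follows essentially the same approach as the paper: both use $\Gamma$ as a Lyapunov function, observe from the HJB equation that the generator of the controlled process applied to $\Gamma$ is bounded above by $\gamma$ (respectively $\gamma+\sup_{\bar G}|\tilde c|$), apply It\^o/Dynkin with the localization $\tau_\delta$, and convert the resulting bound on $E_y[\Gamma(Y(T\wedge\tau_\delta))]$ into a probability estimate via the divergence $\Gamma\to\infty$ at $\partial G$. The only cosmetic differences are that the paper assumes without loss $\Gamma\ge 0$ (where you instead argue $m=\inf_G\Gamma>-\infty$ via compactness of sublevel sets) and phrases the final step as a contradiction, whereas you give the direct quantitative bound $P_y(\tau_\delta\le T)\le (\Gamma(y)+\gamma T-m)/(M(\delta)-m)$.
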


\begin{proof}
Consider first part (a). The argument is based on constructing an appropriate martingale. We use that
\begin{equation}\label{eq:bdrexpl}
\Gamma(x_{i})\uparrow\infty\text{ whenever }x_{i}\rightarrow\bar{x}\in\partial
G\text{ with }x_{i}\in G,
\end{equation}
that $\Gamma$ is $\mathcal{C}^{2}$ on $G$, and that by \eqref{eq:hjbfirst}, and since $a$ is nonnegative definite,
\[
\tilde{\mathcal{L}}\Gamma(x) \doteq b(x)\cdot D\Gamma(x)-a(x)D\Gamma(x)\cdot
D\Gamma(x)+\frac{1}{2}\text{tr[}a(x)D^{2}\Gamma(x)]\leq\gamma\text{ for }x\in G.
\]
Without loss we also assume $\Gamma\geq0$. Fix $T<\infty$ and
suppose that 
\begin{equation}\label{eq:contra}
\liminf_{\delta \to 0}P_{y}\{\tau_{\delta} \leq T\} \doteq \kappa >0.\end{equation}
Choose $M \in (0,\infty)$ such that $\Gamma(y) + \gamma T < \frac{1}{2}\kappa M$.
From \eqref{eq:bdrexpl} we can find $\delta_0 \in (0,\infty)$ so that $$\Gamma(x) \ge M \mbox{ whenever } x\in G, \; \dist(x,\partial G) \le \delta_0.$$
Then, for any $\delta >0$,
\begin{align*}
\Emb_{y}\Gamma(Y(\tau_{\delta}\wedge T))-\Gamma(y)  =\Emb_{y}\int_{0}^{\tau_{\delta
}\wedge T}\tilde{\mathcal{L}}\Gamma(Y(s))ds \leq \gamma T 
\end{align*}
and therefore for any $0< \delta \le\delta_0$
\begin{align*}
M \Pmb_y(\tau_{\delta} \le T)
&\le \Emb_y(\Gamma(Y(\tau_{\delta}))1_{\tau_{\delta}\le T})
\le \Emb_{y}\Gamma(Y(\tau_{\delta}\wedge T)) \le \Gamma(y) + \gamma T < \frac{1}{2}\kappa M.
\end{align*}
In particular, $\liminf_{\delta \to 0}P_{y}\{\tau_{\delta} \leq T\} \le \kappa/2$ which contradicts \eqref{eq:contra}.
The result follows. 

The proof of part (b) is similar on using the fact that $\sup_{x\in G} |\tilde c (x)| \doteq c_0 <\infty$ and that,
 if $(\Gamma ,\gamma )$ is a solution of \eqref{eq:hjbfirststar}, then
 \[
 \tilde{\mathcal{L}}\Gamma(x) \doteq \tilde \beta(x)\cdot D\Gamma(x)-a(x)D\Gamma(x)\cdot
 D\Gamma(x)+\frac{1}{2}\text{tr[}a(x)D^{2}\Gamma(x)]\leq (\gamma + c_0)\text{ for }x\in G.
 \]
We omit the details.

\end{proof}
\subsection{Proof of Theorem \protect\ref{thm:ergcontandexitrate}}

Recall that $\psi $ satisfies the equation 
\begin{equation}
\begin{aligned} {\mathcal{L}} \psi(x) &= - \lambda \psi(x), \;\; x \in G,\\
\psi(x) &= 0 , \;\; x \in \partial G. \end{aligned}  \label{eq:linpde1}
\end{equation}%
It is easy to verify that $\Psi =-\log \psi $ satisfies 
\begin{equation}\label{eq:xixj}
D\psi =-\psi D\Psi ,\;\;\psi _{x_{i}x_{j}}=-\psi (\Psi _{x_{i}x_{j}}-\Psi
_{x_{i}}\Psi _{x_{j}})
\end{equation}%
for all $1\leq i,j\leq d$. Thus 
\begin{equation*}
{\mathcal{L}}\psi (x)=b(x)\cdot D\psi (x)+\frac{1}{2}\mbox{tr}\lbrack
a(x)D^{2}\psi (x)]=-\psi \left( b(x)\cdot D\Psi(x) +\frac{1}{2}\mbox{tr}\lbrack
a(x)D^{2}\Psi (x)]-\frac{1}{2}D\Psi (x)\cdot a(x)D\Psi (x)\right) .
\end{equation*}%
Substituting this in the first line of \eqref{eq:linpde1} and recalling that 
$\psi (x)>0$ for $x\in G$, we have 
\begin{equation}
b(x)\cdot D\Psi (x)+\frac{1}{2}\mbox{tr}\lbrack a(x)D^{2}\Psi (x)]-\frac{1}{2%
}D\Psi (x)\cdot a(x)D\Psi (x)=\lambda ,\;\mbox{ for all }x\in G.
\label{eq:nonlin1}
\end{equation}%
Note that, for any $v\in {\mathbb{R}}^{d}$ 
\begin{equation*}
\min_{u\in {\mathbb{R}}^{d}}\left[ (b(x)+\sigma (x)u)\cdot v+\frac{1}{2}%
\left\Vert u\right\Vert ^{2}\right] =b(x)\cdot v-\frac{1}{2}v\cdot a(x)v.
\end{equation*}%
Using this in \eqref{eq:nonlin1} we have 
\begin{equation}
\frac{1}{2}\mbox{tr}\lbrack a(x)D^{2}\Psi (x)]+\min_{u\in {\mathbb{R}}^{d}}%
\left[ (b(x)+\sigma (x)u)\cdot D\Psi (x)+\frac{1}{2}\left\Vert u\right\Vert
^{2}\right] =\lambda ,\;\mbox{ for all }x\in G.  \label{eq:nonlin2}
\end{equation}%
Also, since $\psi (x)\rightarrow 0$ as $x\rightarrow \partial G$, we have
that $\Psi (x)\rightarrow \infty $ as $x\rightarrow \partial G$. Combining
these facts we have part 1 of the theorem.

Now consider part 2. Let $(\tilde{\Psi},\tilde{\lambda})\in {\mathcal{C}}%
^{2}(G)\times (0,\infty )$ be a solution of the HJB equation in %
\eqref{eq:hjbfirst}. Let $\tilde{\psi}(x)\doteq e^{-\tilde{\Psi}(x)}$ for $x\in G
$ and $\tilde{\psi}(x)\doteq 0$ for $x\in \partial G$. Then $\tilde{\psi}(x)\in {%
\mathcal{C}}^{2}(G)\cap {\mathcal{C}}(\bar{G})$ satisfies $\tilde{\psi}(x)>0$
for all $x\in G$; also it is easily verified that $(\tilde{\psi},\tilde{\lambda})
$ solve \eqref{eq:linpde1}. By the Krein-Rutman theorem once more (cf. \cite[%
Theorem 1.3]{du2006order}) we must have that for some $\kappa _{0}\in
(0,\infty )$, $\tilde{\psi}(x)=\kappa _{0}\psi (x)$ for all $x\in \bar{G}$ and 
$\lambda =\tilde{\lambda}$. This proves part 2 with $C=-\log \kappa _{0}$.

Consider now part 3. For $\delta \in (0,\infty )$, let 
\begin{equation*}
G_{\delta }=\{x\in G:\dist(x,\partial G)\geq \delta \}.
\end{equation*}%
Then, from our assumptions on the coefficients it follows that for every $%
y\in G_{\delta }$ there is a continuous $\{{\mathcal{F}}_{t}\}$%
-adapted process $Y^{\delta }(t)$ that is  unique in the strong sense \cite{KaratzasShreve1991brownian}
such that if $\tau _{\delta }\doteq \inf
\{t\geq 0:Y^{\delta }(t)\in \partial G_{\delta }\}$, then for all $0\leq
t<\tau _{\delta }$, 
\begin{equation*}
Y^{\delta }(t)=y+\int_{0}^{t}b(Y^{\delta }(s))ds+\int_{0}^{t}\sigma
(Y^{\delta }(s))dB(s)-\int_{0}^{t}a(Y^{\delta }(s))D\Psi (Y^{\delta }(s))ds.
\end{equation*}%
From the uniqueness it follows that for $0<\delta _{1}<\delta _{2}<\infty $, 
\begin{equation*}
Y^{\delta _{1}}(t)=Y^{\delta _{2}}(t)\mbox{ for all }0\leq t<\tau _{\delta
_{2}}.
\end{equation*}%
Also, note that $G=\cup _{\delta >0}G_{\delta }$. In order to complete the
proof of part 3 it suffices to show that, for every $0<t<\infty$,
\begin{equation*}
{P}_{y}(\tau _{\delta _{1}}<t)\rightarrow 0\mbox{ as }\delta
_{1}\rightarrow 0.
\end{equation*}%
However this is an immediate consequence of Lemma \ref{lem:lemstayin}. This proves 3.

In order to show that $\mu (dx)\doteq \varphi (x)\psi (x)dx$ is an invariant
measure for $Y$ defined in \eqref{contback2}, it suffices to show that 
\begin{equation}
({\mathcal{L}}_{Y}^{\ast }[\varphi \psi ])(x)=0\mbox{ for all }x\in G,
\end{equation}%
where ${\mathcal{L}}_{Y}^{\ast }$ is the formal adjoint of the infinitesimal
generator ${\mathcal{L}}_{Y}$ of the Markov process $Y$ in \eqref{contback2}. 
To simplify notation we suppress the independent variable $x$.
Then for $f\in {\mathcal{C}}_{0}^{2}(G)$
\begin{equation}
{\mathcal{L}}_{Y}f=b\cdot Df-Df\cdot aD\Psi +\frac{1}{2}%
\mbox{tr}[aD^{2}f],
\end{equation}%
and therefore for $h\in {\mathcal{C}}_{0}^{2}(G)$
\begin{equation}
{\mathcal{L}}_{Y}^{\ast }h=\hat{\beta}\cdot Dh+\frac{1}{2}\mbox{tr}%
\lbrack aD^{2}h]-\hat{c}h,
\end{equation}%
where with $\beta $ is as defined in \eqref{eq:defbeta}
\begin{equation}
 \hat{\beta}=\beta -\frac{1}{\psi }aD\psi ,
 \quad \hat{\beta}_{i}=-b_{i}-\frac{1}{\psi} (aD\psi)_{i}+\sum_{j=1}^{d}(a_{ij})_{x_{j}},
\label{eqn:forbetasnew}
\end{equation}
and with $c$  is defined in \eqref{eq:cterm}
\begin{equation}\label{eqn:chat}
\begin{aligned}
\hat{c}& =-\frac{1}{2}\sum_{i,j=1}^{d}(a_{ij})_{x_{i}x_{j}}+%
\sum_{i=1}^{d}(b_{i})_{x_{i}}-\sum_{i=1}^{d}\sum_{j=1}^{d}[a_{ij}\Psi
_{x_{j}}]_{x_{i}} 
=c-\sum_{i=1}^{d}\sum_{j=1}^{d}a_{ij}\Psi
_{x_{i}x_{j}}-\sum_{i=1}^{d}\sum_{j=1}^{d}(a_{ij})_{x_{i}}\Psi
_{x_{j}}.
\end{aligned}
\end{equation}
We will use\bigskip\ $\Psi _{x_{j}}=-\psi _{x_{j}}/\psi $, as well as the
identity
\begin{equation*}
\varphi \psi \sum_{i,j=1}^{d}a_{ij}\Psi _{x_{i}x_{j}}=-\varphi \mbox{tr}%
\lbrack aD^{2}\psi ]+\frac{\varphi }{\psi }D\psi \cdot aD\psi ,
\end{equation*}%

To show $\mathcal{L}_{Y}^{\ast }[\varphi \psi ]=0$, since $\mathcal{L}\psi
=-\lambda \psi $ and $\mathcal{L}^{\ast }\varphi =-\lambda \varphi $ is
suffices to establish 
\begin{equation}
\hat{\beta}\cdot D[\varphi \psi ]+\frac{1}{2}\mbox{tr}\lbrack aD^{2}\psi
\varphi ]-\hat{c}\varphi \psi \stackrel{?}{=}\psi \mathcal{L}^{\ast }\varphi
-\varphi \mathcal{L}\psi ,
\end{equation}%
where $?$ indicates that the equality is not yet established. Expanding
derivatives and using the definitions of $\mathcal{L}$ and $\mathcal{L}%
^{\ast }$ gives the equivalent statement 
\begin{equation*}
\begin{aligned}
&\psi \hat{\beta}\cdot D\varphi +\varphi \hat{\beta}\cdot D\psi +\frac{1}{2}%
\psi \mbox{tr}\lbrack aD^{2}\varphi ]+\frac{1}{2}\varphi \mbox{tr}\lbrack
aD^{2}\psi ]+D\varphi \cdot aD\psi -\hat{c}\varphi \psi  \\
&\qquad \stackrel{?}{=}\psi \beta \cdot D\varphi +\frac{1}{2}\psi \mbox{tr}\lbrack
aD^{2}\varphi ]-c\varphi \psi -\varphi b\cdot D\psi -\frac{1}{2}\varphi %
\mbox{tr}\lbrack aD^{2}\psi ].
\end{aligned}
\end{equation*}%
Cancellation of $\frac{1}{2}\psi \mbox{tr}\lbrack aD^{2}\varphi ]$ and then
substitution from \eqref{eqn:forbetasnew} gives the statement%
\begin{equation*}
\begin{aligned}
&\psi \beta \cdot D\varphi -aD\psi \cdot D\varphi -\varphi b\cdot D\psi -%
\frac{\varphi }{\psi }aD\psi \cdot D\psi +\varphi
\sum_{i,j=1}^{d}(a_{ij})_{x_{j}}\psi _{x_{i}}+\frac{1}{2}\varphi \mbox{tr}%
\lbrack aD^{2}\psi ]+D\varphi \cdot aD\psi -\hat{c}\varphi \psi  \\
&\qquad \stackrel{?}{=}\psi \beta \cdot D\varphi -c\varphi \psi -\varphi b\cdot D\psi -%
\frac{1}{2}\varphi \mbox{tr}\lbrack aD^{2}\psi ].
\end{aligned}
\end{equation*}%
Next we cancel where obvious, bring the $\frac{1}{2}\varphi \mbox{tr}\lbrack
aD^{2}\psi ]$ term to the right hand side, substitute for $\hat{c}$ and use
\eqref{eqn:chat} to get 
\begin{equation*}
-\frac{\varphi }{\psi }aD\psi \cdot D\psi +\varphi
\sum_{i,j=1}^{d}(a_{ij})_{x_{i}}\psi _{x_{j}}-c\varphi \psi -\varphi %
\mbox{tr}\lbrack aD^{2}\psi ]+\frac{\varphi }{\psi }D\psi \cdot aD\psi
-\varphi \sum_{i,j=1}^{d}(a_{ij})_{x_{j}}\psi _{x_{i}} 
\stackrel{?}{=}-c\varphi \psi -\varphi \mbox{tr}\lbrack aD^{2}\psi ].
\end{equation*}%
Cancellation now gives the desired result.
This completes the proof of part 4.

Now let $Y$ be as given in part 3 and for $\delta \in (0,\infty )$ let 
\begin{equation}
\tau _{\delta }\doteq \inf \{t>0:\mbox{dist}(Y(t),\partial G)\leq \delta \}.
\label{eq:taudeldef}
\end{equation}%
By It\^{o}'s formula, and since $\Psi $ satisfies \eqref{eq:nonlin1}, for
any $t>0$ 
\begin{equation}\label{eq:itoypsi}
\begin{aligned}
\Psi (Y(t))-\Psi (Y(0))& =\int_{0}^{t}b(Y(s))\cdot D\Psi
(Y(s))ds+\int_{0}^{t}D\Psi (Y(s))\cdot \sigma (Y(s))dB(s) \\
& \quad -\int_{0}^{t}D\Psi (Y(s))\cdot a(Y(s))D\Psi (Y(s))ds+\frac{1}{2}%
\int_{0}^{t}\mbox{tr}\lbrack D^{2}\Psi (Y(s))a(Y(s))]ds \\
& =\int_{0}^{t}D\Psi (Y(s))\cdot \sigma (Y(s))dB(s)-\frac{1}{2}\int_{0}^{t}%
\left[ D\Psi (Y(s))\cdot a(Y(s))D\Psi (Y(s))\right] ds+\lambda t.
\end{aligned}
\end{equation}
Applying the above identity with $t=T\wedge \tau _{\delta }$, taking
expectations, and dividing by $T$ throughout 
\begin{equation}
\begin{aligned} \frac{1}{2T} {E_y} \int_0^{T\wedge \tau_{\delta}} D
\Psi(Y(s))\cdot  a(Y(s)) D\Psi(Y(s)) ds &= \lambda \frac{\Emb_y(T\wedge
\tau_{\delta})}{T} + \frac{\Psi(y)- \Emb_y\Psi(Y(T\wedge \tau_{\delta}))}{T}\\
&\le \lambda + \frac{\Psi(y)+ \sup_{y \in G}(\Psi(y))_{-}}{T}. \end{aligned}
\label{eq:upplam}
\end{equation}%
Sending $\delta \rightarrow 0$, we have by the monotone convergence theorem
that 
\begin{equation*}
\frac{1}{2T}{E}_{y}\int_{0}^{T}\left[ D\Psi (Y(s))\cdot
a(Y(s))D\Psi (Y(s))\right] ds\leq \lambda +\frac{\Psi (y)+\sup_{y\in G}(\Psi
(y))_{-}}{T}.
\end{equation*}%
Recalling that the process $u$ defined in part 3 is in ${\mathcal{A}}(y)$,
we now have that 
\begin{equation}
J(y)\leq \limsup_{T\rightarrow \infty }\frac{1}{2T}{E}%
_{y}\int_{0}^{T}\Vert u(s)\Vert ^{2}ds=\limsup_{T\rightarrow \infty }\frac{1%
}{2T}{E}_{y}\int_{0}^{T}\left[ D\Psi (Y(s))\cdot a(Y(s))D\Psi (Y(s))%
\right] ds\leq \lambda .  \label{eq:uppbdonv}
\end{equation}

We now consider the reverse direction. Suppose that $u\in {\mathcal{A}}(y)$
and consider the corresponding state process $Y$ defined according to %
\eqref{contback}. 
Under Assumption \ref{assu:coeff},
it follows from \cite{linir} that in terms of the function $\mbox{dist}(x,G)$
we can define a sequence of bounded
domains $\{G_n\}_{n \in {\mathbb{N}}}$ such that $\dist(G_n^c, G) \to 0$ as $%
n\to \infty$, and for all $n \in {\mathbb{N}}$, $G_n \supset G_{n+1} \supset
G$ , $\dist(G_n^c, G) >0$, $G_n$ is ${\mathcal{C}}^{2,\alpha}$ for some $%
\alpha \in (0,1]$ that is independent of $n$.

For $n\in {\mathbb{N}}$, let $\lambda _{n}\in (0,\infty )$
be the principal  eigenvalue of $-{\mathcal{L}}$ associated with
the domain $G_{n}$ (so that any other  eigenvalue $\tilde{\lambda}%
_{n}$ of $-{\mathcal{L}}$ associated with the domain $G_{n}$ satisfies $%
\lambda _{n}\leq \mbox{Re}(\tilde{\lambda}_{n})$). It is known that $\lambda
_{n}$ is a decreasing sequence (cf. \cite[Proposition 2.3]{berros}) and as $%
n\rightarrow \infty $ $\lambda _{n}\rightarrow \lambda $. The latter
property follows, for example, from \cite[Theorem 1.10]{berros}.

Let $\psi _{n}$ be a strictly positive eigenvector associated with the
eigenvalue $\lambda _{n}$ and let $\Psi _{n}\doteq -\log \psi _{n}$. Note
that $\Psi _{n}\in {\mathcal{C}}^{2}(G_{n})$ and so in particular its
restriction to $\bar{G}$ is in ${\mathcal{C}}^{2}(\bar{G})$. Applying It\^{o}%
's formula to $\Psi _{n}(Y(t))$, and noting that \eqref{eq:nonlin2} is
satisfied with $(\Psi ,\lambda ,G)$ replaced with $(\Psi _{n},\lambda
_{n},G_{n})$, we have, for any $T>0$, 
\begin{align*}
\Psi _{n}(Y(T))-\Psi _{n}(Y(0))& =\int_{0}^{T}b(Y(s))\cdot D\Psi
_{n}(Y(s))ds+\int_{0}^{T}D\Psi _{n}(Y(s))\cdot \sigma (Y(s))dB(s) \\
& \quad +\int_{0}^{T}D\Psi _{n}(Y(s))\cdot \sigma (Y(s))u(s)ds+\frac{1}{2}%
\int_{0}^{T}\mbox{tr}\lbrack D^{2}\Psi _{n}(Y(s))a(Y(s))]ds \\
& \geq \int_{0}^{T}D\Psi _{n}(Y(s))\cdot \sigma (Y(s))dB(s)-\frac{1}{2}%
\int_{0}^{T}\Vert u(s)\Vert ^{2}ds+\lambda _{n}T.
\end{align*}%
Thus 
taking expectations, and dividing by $T$ throughout, we have 
\begin{equation}\label{eq:otherdir}
\frac{1}{2T}{E}_{y}\int_{0}^{T}\Vert u(s)\Vert ^{2}ds\geq \lambda
_{n}+\frac{1}{T}(\Psi _{n}(y)-{E_y}\Psi _{n}(Y(T))).
\end{equation}%
Finally, sending $T\to \infty$ and recalling that $\Psi_n$
is bounded on $\bar G$ and $Y(T) \in G$ a.s., 
\begin{equation*}
J(y)=\inf_{u\in {\mathcal{A}}(y)}\limsup_{T\rightarrow \infty }\frac{1}{2T}{%
E}_{y}\int_{0}^{T}\Vert u(s)\Vert ^{2}ds\geq \lambda _{n}.
\end{equation*}%
Sending $n\rightarrow \infty $ we get that $J(y)\geq \lambda $ which
completes the proof of part 5.

Part 6 is now immediate from part 5 and the inequality in \eqref{eq:uppbdonv} and \eqref{eq:otherdir}.

Finally, the proof of \eqref{eq:lamdainv} can be completed as in %
\eqref{eq:upplam} and in \eqref{eq:otherdir} by taking $Y$ to be as given by
the equation in \eqref{contback2} but with $Y(0)$ distributed according to
the stationary distribution $\mu$ (and observing that
$\int_G |\Psi(y)| \mu(dy) <\infty$).

\hfill \qed
\section{Proof of Theorem \ref{thm:ergcontandqsd}}
Since many arguments are similar to that in the proof of Theorem \ref{thm:ergcontandexitrate}, we will omit some details. Recall 
from Theorem \ref{thm:llstar} that $(\tilde \varphi, \lambda)$ satisfy
\begin{equation}  \label{eq:evlstarz}
\begin{aligned} -{\mathcal{L}^*_{\dens}} \phi(x) &= \lambda \phi(x), & x \in G\\
\phi(x) &= 0, & x \in \partial G \end{aligned}
\end{equation}
Note that, with $\tilde \Phi = - \log \tilde \varphi$, \eqref{eq:xixj} holds with $(\psi, \Psi)$ replaced with
$(\tilde \varphi, \tilde \Phi)$. Thus, recalling \eqref{eq:generadjden}, 
\begin{equation*}
	\begin{aligned}
{\mathcal{L}^*_{\dens}}\tilde\varphi (x)&= \tilde \beta(x)\cdot D\tilde\varphi (x)+\frac{1}{2}\mbox{tr}\lbrack
a(x)D^{2}\tilde\varphi (x)] - \tilde c(x) \tilde\varphi (x)\\
&=-\tilde \varphi(x) \left( \tilde \beta(x)\cdot D\tilde\Phi +\frac{1}{2}\mbox{tr}\lbrack
a(x)D^{2}\tilde \Phi (x)]-\frac{1}{2}D\tilde\Phi (x)\cdot a(x)D\tilde\Phi (x) +\tilde c(x)\right) .
\end{aligned}
\end{equation*}%
Since $\tilde \varphi(x)>0$ for all $x \in G$, we have from \eqref{eq:evlstarz} that
$$\tilde \beta(x)\cdot D\tilde\Phi +\frac{1}{2}\mbox{tr}\lbrack
a(x)D^{2}\tilde \Phi (x)]-\frac{1}{2}D\tilde\Phi (x)\cdot a(x)D\tilde\Phi (x) +\tilde c(x) = \lambda$$
for all $x\in G$ from which part 1 of the theorem follows as in the proof of Theorem \ref{thm:ergcontandexitrate}.

Parts 2 and 3 follow exactly as in Theorem \ref{thm:ergcontandexitrate} by using Krein-Rutman theorem  (cf. \cite[Theorem 1.3]{du2006order}) and Lemma \ref{lem:lemstayin} (b).

Now we consider part 4.  It suffices to show that 
\begin{equation}
({\mathcal{L}}_{Z}^{\ast }[\varphi \psi ])(x)=0\mbox{ for all }x\in G,
\end{equation}%
where ${\mathcal{L}}_{Z}^{\ast }$ is the formal adjoint of the infinitesimal
generator ${\mathcal{L}}_{Z}$ of the Markov process $Z$ in \eqref{contbackstar2}. 
As in the last section, we simplify notation by supressing the independent variable. 
For $f\in {\mathcal{C}}_{0}^{2}(G)$
\begin{equation}
{\mathcal{L}}_{Z}f=\tilde \beta\cdot Df-Df\cdot aD\tilde\Phi +\frac{1}{2}%
\mbox{tr}\lbrack aD^{2}f],
\end{equation}%
and therefore for $h\in {\mathcal{C}}_{0}^{2}(G)$
\begin{equation}
{\mathcal{L}}_{Z}^{\ast }h=\tilde{\beta}^*\cdot Dh+\frac{1}{2}\mbox{tr}%
\lbrack aD^{2}h]-{c}^*h,
\end{equation}%
where using   \eqref{eq:defbeta} and \eqref{eq:tilbeta}
\begin{equation*}
\tilde{\beta}^{\ast }=b-\frac{1}{\varsigma }aD\varsigma +aD\tilde{\Phi}
\end{equation*}%
and 
\begin{equation*}
c^{\ast }=-c+\frac{1}{\varphi }\mbox{tr}\lbrack aD^{2}\varphi ]-\frac{1}{%
\varphi ^{2}}D\varphi \cdot aD\varphi +\sum_{ij=1}^{d}(a_{ij})_{x_{i}}\frac{%
\varphi _{x_{j}}}{\varphi }.
\end{equation*}%
Using $\tilde \varphi  = \varphi /\dens$ (see Theorem \ref{thm:llstar})
\begin{equation*}
D\tilde{\Phi}=D[-\log \varphi /\dens]=\frac{1}{\dens }D\dens -%
\frac{1}{\varphi }D\varphi ,
\end{equation*}%
it follows that 
\begin{equation*}
\tilde{\beta}^{\ast }=b-\frac{1}{\varsigma }aD\varsigma +\frac{1}{\varsigma }%
aD\varsigma -\frac{1}{\varphi }aD\varphi =b-\frac{1}{\varphi }aD\varphi .
\end{equation*}

To show $\mathcal{L}_{Z}^{\ast }[\varphi \psi ]=0$, since $\mathcal{L}\psi
=-\lambda \psi $ and $\mathcal{L}^{\ast }\varphi =-\lambda \varphi $ it
suffices to show 
\begin{equation*}
\tilde{\beta}^{\ast }\cdot D[\varphi \psi ]+\frac{1}{2}\mbox{tr}\lbrack
aD^{2}[\varphi \psi ]]-c^{\ast }\varphi \psi \stackrel{?}{=} \varphi \mathcal{%
L}\psi -\psi {\mathcal{L}}^{\ast }\varphi ,
\end{equation*}%
where again the $?$ indicates the equality has yet to be established.
Substituting for the generators and expanding gives the equivalent statement%
\begin{equation*}
\begin{aligned}
&\psi b\cdot D\varphi +\varphi b\cdot D\psi -\frac{\psi }{\varphi }%
aD\varphi \cdot D\varphi -aD\varphi \cdot D\psi +\frac{1}{2}\psi \mbox{tr}%
\lbrack aD^{2}\varphi ]+\frac{1}{2}\varphi \mbox{tr}\lbrack aD^{2}\psi ] \\
&\qquad +D\varphi \cdot aD\psi +c\varphi \psi -\psi \mbox{tr}\lbrack D^{2}\varphi
a]+\frac{\psi }{\varphi }D\varphi \cdot aD\varphi -\psi
\sum_{ij=1}^{d}(a_{ij})_{x_{i}}\varphi _{x_{j}} \\
&\quad \stackrel{?}{=}\varphi b\cdot D\psi +\frac{1}{2}\varphi \mbox{tr}\lbrack
aD^{2}\psi ]+\psi b\cdot D\varphi -\psi
\sum_{ij=1}^{d}(a_{ij})_{x_{j}}\varphi _{x_{i}}-\frac{1}{2}\psi \mbox{tr}%
\lbrack aD^{2}\varphi ]+c\varphi \psi .
\end{aligned}
\end{equation*}%
Cancellations then give the statement 
\begin{equation*}
-aD\varphi \cdot D\psi +D\varphi \cdot aD\psi\stackrel{?}{=}0,
\end{equation*}%
and the result follows from the symmetry of $a$.
This completes the proof of 4.

For proof of 5 we first argue $J^*(z)\le \lambda $ and then $J^*(z)\ge \lambda$. For the first inequality we introduce
$\tau_{\delta}$ as in \eqref{eq:taudeldef} with $Y$ replaced with $Z$ and apply It\^{o}'s formula to
$\tilde \Phi(Z(t))$ in a manner similar to \eqref{eq:itoypsi}. Then as in \eqref{eq:upplam} we see that
\begin{equation}
\begin{aligned} \frac{1}{2T} {E_y} \int_0^{T\wedge \tau_{\delta}} \left(D
\tilde\Phi(Z(s))\cdot  a(Z(s)) D\tilde\Phi(Y(s)) + \tilde c(Z(s))\right) ds 
&\le \lambda + \frac{\tilde\Phi(y)+ \sup_{y \in G}(\tilde\Phi(y))_{-}}{T}. \end{aligned}
\label{eq:upplam2}
\end{equation}%
The inequality $J^*(z)\le \lambda $ follows as in the proof of Theorem \ref{thm:ergcontandexitrate}. 
For the reverse inequality we once more consider domains $G_n \supset G_{n+1} \supset
G$ as in Theorem \ref{thm:ergcontandexitrate} and principle eigenvalue-eigenvector pair $(\lambda_n, \tilde \varphi_n)$
associated with $-\mathcal{L}^*_{\dens}$. We then apply It\^{o}'s formula to
$\tilde \Phi_n(Z(t))$ where $\tilde \Phi_n = -\log \tilde \varphi_n$ and $Z$ is given by \eqref{contbackstarden}
with an arbitrary $u \in {\mathcal{A}}^{* }(z)$. Then as in the proof of \eqref{eq:otherdir} we get
\begin{equation}\label{eq:otherdirb}
\frac{1}{2T}{E}_{z}\int_{0}^{T}\left(\Vert u(s)\Vert ^{2} + \tilde c(Z(s)) \right) ds\geq \lambda
_{n}+\frac{1}{T}(\tilde\Phi _{n}(y)-{E_y}\tilde\Phi _{n}(Y(T))).
\end{equation}%
The proof of the inequality $J^*(z)\ge \lambda$ is now concluded as in the proof of Theorem \ref{thm:ergcontandexitrate}.
This completes the proof of part 5. Part 6 is now immediate from part 5.
\hfill \qed
\vspace{\baselineskip}

\noindent \textbf{Acknowledgement:} 
The research of AB was supported in part by the NSF (DMS-1814894 and DMS-1853968).
The research of PD was supported in part by the National Science Foundation (DMS-1904992) and the AFOSR (FA-9550-18-1-0214). 
The research of PN was supported in part by the Swedish Research Council (VR-2018-07050) and the Wallenberg AI, Autonomous Systems and Software Program (WASP) funded by the Knut and Alice Wallenberg Foundation. The research of GW was supported in part by the Swedish e-Science Research Centre through the Data Science MCP.

\begin{bibdiv}
\begin{biblist}

\bib{berros}{article}{
      author={Berestycki, Henri},
      author={Rossi, Luca},
       title={Generalizations and properties of the principal eigenvalue of
  elliptic operators in unbounded domains},
        date={2015},
     journal={Communications on Pure and Applied Mathematics},
      volume={68},
      number={6},
       pages={1014\ndash 1065},
}

\bib{chavil2}{article}{
      author={Champagnat, Nicolas},
      author={Villemonais, Denis},
       title={Exponential convergence to quasi-stationary distribution and
  $q$-process},
        date={2016},
     journal={Probability Theory and Related Fields},
      volume={164},
      number={1-2},
       pages={243\ndash 283},
}

\bib{champagnat2017general}{article}{
      author={Champagnat, Nicolas},
      author={Villemonais, Denis},
       title={General criteria for the study of quasi-stationarity},
        date={2017},
     journal={arXiv preprint arXiv:1712.08092},
}

\bib{colmarmar}{book}{
      author={Collet, Pierre},
      author={Mart{\'\i}nez, Servet},
      author={San~Mart{\'\i}n, Jaime},
       title={Quasi-stationary distributions: Markov chains, diffusions and
  dynamical systems},
   publisher={Springer Science \& Business Media},
        date={2012},
}

\bib{du2006order}{book}{
      author={Du, Yihong},
       title={Order structure and topological methods in nonlinear partial
  differential equations: Vol. 1: Maximum principles and applications},
   publisher={World Scientific},
        date={2006},
      volume={2},
}

\bib{giltru}{book}{
      author={Gilbarg, David},
      author={Trudinger, Neil~S},
       title={Elliptic partial differential equations of second order},
   publisher={springer},
        date={2015},
      volume={224},
}

\bib{goqizh}{article}{
      author={Gong, Guanglu},
      author={Qian, Minping},
      author={Zhao, Zhongxin},
       title={Killed diffusions and their conditioning},
        date={1988},
     journal={Probability Theory and Related Fields},
      volume={80},
      number={1},
       pages={151\ndash 167},
}

\bib{heimiz}{incollection}{
      author={Heinricher, A.~C.},
      author={Mizel, V.~J.},
       title={The {L}avrentiev phenomenon for invariant variational problems},
        date={1989},
   booktitle={In: {A}nalysis and {C}ontinuum {M}echanics},
   publisher={Springer},
     address={Berlin, Heidelberg},
       pages={709\ndash 745},
         url={https://doi.org/10.1007/978-3-642-83743-2_38},
}

\bib{sito2}{article}{
      author={It{\^o}, Seiz{\^o}},
       title={The fundamental solution of the parabolic equation in a
  differentiable manifold. {I}{I}},
        date={1954},
     journal={Osaka Mathematical Journal},
      volume={6},
      number={2},
       pages={167\ndash 185},
}

\bib{KaratzasShreve1991brownian}{book}{
      author={Karatzas, I.},
      author={Shreve, S.~E.},
       title={{Brownian Motion and Stochastic Calculus}},
      series={Graduate Texts in Mathematics},
   publisher={Springer New York},
        date={1991},
      volume={113},
        ISBN={9780387976556},
}

\bib{kusdup1}{book}{
      author={Kushner, H.~J.},
      author={Dupuis, P.},
       title={Numerical methods for stochastic control problems in continuous
  time},
   publisher={Springer-Verlag},
     address={New York},
        date={2001},
        note={Revised Second Edition},
}

\bib{linir}{article}{
      author={Li, Y.-Y.},
      author={Nirenberg, L.},
       title={Regularity of the distance function to the boundary},
        date={2003},
     journal={arXiv:math},
        ISSN={0306122v1},
}

\bib{melvil}{article}{
      author={M{\'e}l{\'e}ard, S.},
      author={Villemonais, D.},
       title={Quasi-stationary distributions and population processes},
        date={2012},
     journal={Probab. Surv.},
      volume={9},
       pages={340–\ndash 410},
}

\bib{pin}{article}{
      author={Pinsky, Ross~G},
       title={On the convergence of diffusion processes conditioned to remain
  in a bounded region for large time to limiting positive recurrent diffusion
  processes},
        date={1985},
     journal={The Annals of Probability},
       pages={363\ndash 378},
}

\end{biblist}
\end{bibdiv}


\vspace{\baselineskip}

\noindent{\scriptsize {\textsc{\noindent A. Budhiraja\newline
Department of Statistics and Operations Research\newline
University of North Carolina\newline
Chapel Hill, NC 27599, USA\newline
email: budhiraj@email.unc.edu \vspace{\baselineskip} } }}

{\scriptsize \noindent\textsc{\noindent P. Dupuis \newline
Division of Applied Mathematics\newline
Brown University\newline
Providence, RI 02912, USA\newline
email: paul\_dupuis@brown.edu \vspace{\baselineskip} } }

{\scriptsize \noindent\textsc{\noindent P. Nyquist and G.-J. Wu%
\newline
Department of Mathematics\newline
KTH Royal Institute of Technology\newline
100 44 Stockholm, Sweden\newline
email: pierren@kth.se, gjwu@kth.se \vspace{\baselineskip}} }

\end{document}